\def\th@exercise{%
  \normalfont 
  \thm@headpunct{:}%
}
\title[Spectral Simplicity of the Hodge Laplacian]{On Spectral Simplicity of the Hodge Laplacian and Curl Operator along Paths of Metrics}
\author{Willi Kepplinger}
\newtheorem{thm}{Theorem}[section]
\newtheorem{cor}[thm]{Corollary}
\newtheorem{lem}[thm]{Lemma}
\newtheorem{definition}[thm]{Definition}
\newtheorem{proposition}[thm]{Proposition}
\theoremstyle{remark}
\newtheorem*{rmk}{Remark}
\theoremstyle{plain}
\newtheoremstyle{note}
  {3pt}
  {3pt}
  {}
  {}
  {\itshape}
  {:}
  {.5em}
  {}
\theoremstyle{note}
\newtheoremstyle{citing}
  {3pt}
  {3pt}
  {\itshape}
  {}
  {\bfseries}
  {.}
  {.5em}
  {\thmnote{#3}}
\theoremstyle{citing}
\newtheoremstyle{break}
  {9pt}
  {9pt}
  {\itshape}
  {}
  {\bfseries}
  {.}
  {\newline}
  {}
\theoremstyle{break}
\theoremstyle{plain}
\let\lvert=|\let\rvert=|
\begin{document}
\maketitle

\begin{abstract}
We prove that the curl operator on closed oriented $3$-manifolds, i.e., the square root of the Hodge Laplacian on its coexact spectrum, generically has $1$-dimensional eigenspaces, even along $1$-parameter families of $\mathcal{C}^k$ Riemannian metrics, where $k\geq 2$. We show further that the Hodge Laplacian in dimension $3$ has two possible sources for nonsimple eigenspaces along generic $1$-parameter families of Riemannian metrics: either eigenvalues coming from positive and from negative eigenvalues of the curl operator cross, or an exact and a coexact eigenvalue cross. We provide examples for both of these phenomena. In order to prove our results, we generalize a method of Teytel \cite{Teytel1999}, allowing us to compute the meagre codimension of the set of Riemannian metrics for which the curl operator and the Hodge Laplacian have certain eigenvalue multiplicities. A consequence of our results is that while the simplicity of the spectrum of the Hodge Laplacian in dimension $3$ is a meagre codimension $1$ property with respect to the $\mathcal{C}^k$ topology as proven by Enciso and Peralta-Salas in \cite{Enciso2012}, it is not a meagre codimension $2$ property.
\end{abstract}

\section{Introduction and Statement of Results}
\label{section:introduction}

While the Hodge Laplacian on differential forms has not received as much attention as the Laplace-Beltrami operator (the Hodge Laplacian on functions), there has been a substantial amount of interest in this operator in recent years. The importance of its (coexact) spectrum in dimension $3$ has been recognized in the study of the geometry of closed $3$-manifolds (e.g., \cite{Lipnowski2018}), low dimensional topology (e.g., \cite{Lin2021}), and that of its eigenforms (or rather those of its square root, the curl operator) in the study of contact topology (e.g., \cite{Etnyre2000III},\cite{Etnyre2012}) and fluid dynamics (e.g., \cite{CarMirPerPres2019}). Properties of its spectrum have been studied in various contexts (e.g., \cite{ColetteJunya2012},\cite{Junya2003}) and are the subject of open problems \cite[Problem $8.24$]{CaivoveanuRassias2001}. 
\par

Generic properties of the eigenfunctions and eigenvalues of the Laplace-Beltrami operator were established in Uhlenbeck's landmark paper \cite{Uhlenbeck1976}. In particular, she showed that, given a closed $n$-dimensional manifold, the eigenvalues of the Laplace-Beltrami operator are generically simple, and that generic $1$-parameter families of Riemannian metrics connecting two Riemannian metrics for which the spectrum is simple have simple spectrum throughout. The natural question of whether similar statements could hold for the Hodge Laplacian was soon answered in the negative by Millman \cite{Millman1980} who proved that for closed even dimensional manifolds, multiplicities of nonzero eigenvalues of the Hodge Laplacian in the middle degree are always even. This means that a theorem in the generality of Uhlenbeck's cannot hold in this context.\par

At least in dimension $3$, however, Enciso and Peralta-Salas proved that the spectrum of the Hodge Laplacian is generically simple \cite{Enciso2012}. They did this in several steps, the first of which is to notice that the spectrum of the Hodge Laplacian in dimension $3$ is the same as that of its restriction to $1$-forms since the Hodge Laplacian $\Delta_g=\delta d+d\delta$ commutes with both the exterior derivative $d$ and the Hodge-star $\ast_g$. By the Hodge decomposition theorem \cite{Hodge1941} the eigen-1-forms to nonzero eigenvalues of the Hodge Laplacian split into exact and coexact ones, and so one naturally calls the spectrum of the Hodge Laplacian on the exact and coexact forms the exact and coexact spectrum, respectively. The exact spectrum coincides with the spectrum of the Laplace-Beltrami operator, and so by Uhlenbeck's result we already know that it is generically simple. Analyzing the coexact spectrum however proves much harder. Enciso and Peralta-Salas do this by studying the curl operator $\ast_g d$, which is the square root of the Hodge Laplacian on coexact forms. They follow the general strategy of Uhlenbeck (which is to apply a Sard-type theorem to the function $\Phi(g,u,\lambda)=(\ast_g d-\lambda)u$) except that significant analytical difficulties arise. Those include the PDE $\ast_g d u=\lambda u$ being vector valued and the curl operator not being elliptic (it has infinite dimensional kernel). After showing the generic simplicity of the spectrum of the curl operator, they still needed to break up the symmetric eigenvalues, that is the ones for which $\lambda_i=-\lambda_j$ for some $i,j$, before showing that the coexact spectrum of the Hodge Laplacian is generically simple. Finally, they examine a variation of the exact and coexact eigenvalues to conclude that even the full Hodge Laplacian has simple spectrum for a residual set in the Banach manifold of $\mathcal{C}^k$ Riemannian metrics with its natural topology.\par

In this article we want to complement and extend these results. Just like in the work mentioned above we say that a set is \textit{meagre} if it is the countable union of nowhere dense sets, and we say a set is \textit{residual} if its complement is meagre. A property is said to be \textit{generic} if it holds for a residual set. Note that all spaces appearing in this text are Baire spaces, and so residual sets are dense.

\begin{thm}
\label{theorem:main}
    Let $2\leq \ell<\infty$ and let $g_0$ and $g_1$ be two $\mathcal{C}^\ell$ Riemannian metrics. Then, for any $k\geq 1$, there exists a residual set in the space of paths $W^k:=\{w\in C^k([0,1],\mathcal{G}^\ell):w(0)=g_0, \,w(1)=g_1\}$ connecting these metrics such that the curl operator $\ast_{g(t)}d$ has simple spectrum for all $0<t<1$. 
\end{thm}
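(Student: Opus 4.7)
The plan is to reduce the statement to a codimension estimate on the ``bad set'' of metrics and then invoke a general transversality principle for $1$-parameter families. Concretely, let $\Sigma\subset\mathcal{G}^l$ denote the set of $\mathcal{C}^l$ Riemannian metrics for which $\ast_g d$ has a repeated eigenvalue (on its coclosed spectrum, where it is well defined). Once $\Sigma$ is shown to have \emph{meagre codimension at least $2$} in the sense generalized from Teytel \cite{Teytel1999} (the tool announced in the abstract), a standard transversality argument for paths shows that a residual set of $w\in W^k$ has image disjoint from $\Sigma$ for all $0<t<1$. This last step is the easy direction, so the bulk of the work will be to verify the codimension $\geq 2$ assertion.

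To verify this, I would follow the Uhlenbeck/Enciso–Peralta-Salas setup: consider the map
\[
\Phi(g,u,\lambda)=(\ast_g d-\lambda)u
\]
on appropriate Sobolev spaces of coclosed $1$-forms (dealing with the infinite-dimensional kernel of $\ast_g d$ by restricting to $\ker d^*$), and linearize. The Kato-style perturbation theory around a metric $g_0$ at which $\lambda$ is a double eigenvalue with eigenforms $u_1,u_2$ produces, for each infinitesimal perturbation $h$ of $g$, a symmetric $2\times 2$ matrix
\[
M_h=\bigl(\langle (\partial_g\!\ast_g d)[h]\, u_i,\,u_j\rangle_{g_0}\bigr)_{i,j=1}^{2}.
\]
Meagre codimension $\geq 2$ in Teytel's sense amounts to showing that the linear map $h\mapsto M_h$ is \emph{surjective} onto the $3$-dimensional space of symmetric $2\times 2$ matrices, at every such $(g_0,\lambda)\in\Sigma$. (Surjectivity onto a $1$-dimensional quotient would only give codimension $1$, which is the Enciso–Peralta-Salas result.)

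The main obstacle is therefore the surjectivity of $h\mapsto M_h$. To establish it, I would work locally on any open set $U$ on which $u_1,u_2$ are pointwise linearly independent (such a $U$ exists by unique continuation for $\ast_g d$, as used in \cite{Enciso2012}). Writing the variation of $\ast_g d$ with respect to $h$ in local coordinates produces an integrand of the form $h^{ab}\,Q_{ab}^{ij}(u_1,u_2)$ for certain symmetric tensors $Q^{ij}$ depending linearly on derivatives of the $u_i$; surjectivity is equivalent to showing that the three tensors $Q^{11},Q^{22},Q^{12}$ are linearly independent as symmetric $2$-tensor-valued distributions on $U$. Pointwise linear independence of $u_1,u_2$ gives enough algebraic room in three dimensions to separate the three matrix entries by choosing $h$ to be a bump supported in $U$ whose tensorial type is aligned with the desired entry; if pointwise independence ever fails, we move to a slightly different region via another application of unique continuation.

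Once this surjectivity is in place, one invokes the generalized Teytel theorem to conclude that $\Sigma$ has meagre codimension $\geq 2$, and then observes that the evaluation map $\mathrm{ev}:W^k\times(0,1)\to\mathcal{G}^l$, $(w,t)\mapsto w(t)$, is a submersion, so that $\{w\in W^k: w((0,1))\cap\Sigma=\emptyset\}$ is residual. This yields the theorem. The analytic subtleties of the Beltrami operator (non-ellipticity, vector-valued PDE) are handled exactly as in \cite{Enciso2012} by restricting to the coclosed sector and using unique continuation; they enter the proof only through the verification that $\Phi$ is a Fredholm map of index $0$ when restricted appropriately, and they do not affect the dimension count above.
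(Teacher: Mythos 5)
Your overall architecture matches the paper's: generalize Teytel's criterion to metric-dependent inner products, show the bad set has meagre codimension $2$ by checking that the perturbation matrices $M_h$ (together with the identity) span the symmetric endomorphisms of the eigenspace, then run the same argument on the parameter space $W^k\times(0,1)$ and project down to lose one codimension, leaving a meagre (hence complementing a residual) set of paths. That skeleton is sound and is essentially what the paper does.

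However, there is a genuine gap at the crux of the surjectivity argument: you assert that an open set $U$ on which $u_1,u_2$ are \emph{pointwise linearly independent} ``exists by unique continuation.'' Unique continuation only guarantees that neither eigenform vanishes on an open set; it does not rule out that $u_1=s\,u_2$ pointwise for some non-constant function $s$ on the complement of the zero sets, in which case no open set of pointwise independence exists and your bump-function construction collapses (all three tensors $Q^{11},Q^{12},Q^{22}$ become pointwise proportional and you can only reach a rank-one family of matrices). Ruling this scenario out is exactly the hard step in the paper's Lemma \ref{lemma: transversality of beltrami operator}: if $u_1=s\,u_2$ on an open set, the case of constant $s$ is excluded by $L^2$-orthogonality plus unique continuation, and for non-constant $s$ the eigenvalue equation $\ast_g d(s\,u_2)=\lambda s\,u_2$ forces $\ast_g(ds\wedge u_2)=0$, i.e.\ $ds\parallel u_2$; this contradicts the fact that any eigenform $u$ of $\ast_g d$ with $\lambda\neq 0$ is a contact form on the complement of its zero set ($u\wedge du=\lambda\lVert u\rVert^2 d\mu_g$), since a contact structure cannot be tangent to the level sets of $s$. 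You must supply this (or an equivalent) argument; ``unique continuation'' alone does not do it. Granting that point, your localized bump-function argument is a legitimate alternative to the paper's global computation: the paper instead uses the specific perturbations $v_i\odot v_j$, identifies an explicit degenerate locus where they fail to span, and repairs it with the auxiliary family $\tilde h_a=v_1\odot v_1+a\,\mathrm{tr}_g[v_1\odot v_1]\,g$ — but both routes ultimately reduce to the impossibility of $v_1\parallel v_2$ pointwise.
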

That is, the spectrum of the curl operator in dimension $3$ is simple along generic $1$-parameter families of Riemannian metrics. Moreover we prove that such a result is false for the Hodge Laplacian itself (unlike for the curl and the Laplace-Beltrami operator) in two different ways. On the one hand, we construct an example of two Riemannian metrics on $\mathbb{S}^3$ so that any path connecting them must have a crossing of eigenvalues, one coming from a positive and one coming from a negative eigenvalue of the associated curl operators, meaning that the Hodge Laplacian restricted to coexact $1$-forms does not have simple spectrum along $1$-parameter families of Riemannian metrics. On the other hand we also construct an example of two Riemannian metrics on $\mathbb{S}^3$ (different from the previous example) so that any path connecting them has a crossing of an exact and a coexact eigenvalue.
Calling those $1$-forms in the image of the spectral projector associated to the positive (negative) part of the spectrum of curl positive (negative), we get the following

\begin{thm}
    \label{theorem: Hodge Laplacian 1-parameter genericity}
    The Hodge Laplacian does not have simple spectrum along generic $1$-parameter families of Riemannian metrics. However, it does if one restricts to the closed $1$-forms, positive coexact $1$-forms, or negative coexact $1$-forms.
\end{thm}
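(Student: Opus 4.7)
The plan is to treat the two assertions of Theorem 1.2 separately, since they call for completely different arguments.

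For the positive part (simplicity on each of the three invariant summands) I would proceed summand by summand. On closed 1-forms the exterior derivative $f\mapsto df$ restricts to a multiplicity-preserving bijection from the nonzero Laplace-Beltrami eigenspaces on $C^\infty(M)$ onto the nonzero closed 1-form eigenspaces of $\Delta_g$, intertwining the two Laplacians. Hence simplicity along a generic path for the closed spectrum is equivalent to simplicity along a generic path for the Laplace-Beltrami operator, which is Uhlenbeck's original 1-parameter theorem (and, as the introduction notes, is in any case easier than Theorem 1.1 and falls out of the same framework). On the coclosed side, since $(\ast_g d)^2 = \Delta_g$ on coclosed 1-forms, the squaring map gives a multiplicity-preserving bijection between the positive (resp.\ negative) Beltrami spectrum and the positive (resp.\ negative) coclosed Hodge spectrum. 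Hence the two remaining assertions follow directly from Theorem 1.1.

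For the negative part I would produce two explicit pairs of metrics on $\mathbb{S}^3$, one per obstruction. The key point is that Theorem 1.1 does \emph{not} prevent a positive and a negative Beltrami eigenvalue from having equal magnitude, nor does it (together with Uhlenbeck) prevent a closed and a coclosed eigenvalue from coinciding. To force the first kind of obstruction, it suffices to exhibit $g_0,g_1$ such that the smallest-magnitude positive Beltrami eigenvalue lies strictly below the smallest-magnitude negative one at $g_0$, while the reverse strict inequality holds at $g_1$; continuity of the Beltrami spectrum along any $C^k$ path then forces an unavoidable magnitude coincidence, hence a degeneracy in the coclosed Hodge spectrum, by the intermediate value theorem. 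The second obstruction would be produced by the same intermediate value argument applied to the smallest Laplace-Beltrami eigenvalue versus the smallest squared Beltrami eigenvalue.

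The main obstacle is clearly the construction of these explicit pairs, since the required orderings have to be rigorously verifiable. The natural source of examples is a family of left-invariant metrics on $\mathbb{S}^3=\mathrm{SU}(2)$, such as the Berger spheres, where Peter-Weyl decomposes the space of 1-forms into finite-dimensional $\mathrm{SU}(2)$-isotypic components on which $\ast_g d$ and $\Delta_g$ act by explicit finite matrices whose eigenvalues can be tracked as the squashing parameter varies. An alternative route is a perturbative argument off the round metric: the round $\mathbb{S}^3$ has an orientation-reversing isometry which pairs eigenvalues $\lambda\leftrightarrow -\lambda$ of $\ast_g d$, and a carefully chosen symmetry-breaking perturbation in one direction (and then in the opposite direction) can tilt the smallest-magnitude positive and negative eigenvalues in opposite ways, yielding the required swapped orderings. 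Once either example is produced, the theorem follows by combining the intermediate value argument with continuity of the spectra of $\ast_g d$ and $\Delta_g$ under $C^k$ variations of $g$.
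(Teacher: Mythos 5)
The positive half of your proposal is correct and is exactly the paper's argument: the closed spectrum is handled by Uhlenbeck's $1$-parameter theorem via the intertwining $f\mapsto df$, and the positive/negative coclosed spectra are handled by squaring the (generically simple, by Theorem \ref{theorem:main}) Beltrami spectrum, which is injective and multiplicity-preserving on each sign class. Your reduction of the negative half to exhibiting two pairs of metrics on $\mathbb{S}^3$ with swapped spectral orderings, followed by continuity of the spectrum and the intermediate value theorem, is also precisely the paper's ``colouring'' argument.

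The genuine gap is that the entire content of the negative half lies in actually constructing those pairs of metrics, and you leave both constructions as acknowledged obstacles rather than carrying either out. The paper realizes your second suggested route (perturbation off the round metric): on the round $\mathbb{S}^3$ the Beltrami operator has eigenvalues $\pm 2$ spanned by the Hopf fields; taking explicit eigenforms $\alpha=(1,0,0)$ and $\beta=(0,\cos x,\sin x)$ in the standard left-invariant coframe and setting $h=\alpha\odot\alpha-\beta\odot\beta$, the Enciso--Peralta-Salas variation formula gives $(D\mu)_g[h]=(D\nu)_g[h]=2\,\mathrm{vol}(\mathbb{S}^3)>0$, so $\mu^2$ increases while $\nu^2$ decreases, and perturbing by $h$ and by $-h$ produces the two required metrics. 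A point your sketch would have to confront, and does not, is that the eigenvalues $\pm 2$ of the round metric each have multiplicity $3$, so the first-order variation formula for \emph{simple} eigenvalues is not directly applicable; the paper first adds a direction transverse to the multiplicity stratum to split the eigenvalues and then invokes openness of the sign conditions. Likewise, for the closed/coclosed crossing you propose ``the same intermediate value argument'' but supply no example; the paper takes a metric from Tanno's explicit family on $\mathbb{S}^3$ at which a closed and a coclosed eigenvalue coincide (working with the lowest such coincidence so that the reordering cannot be compensated elsewhere), and splits it in both directions using the variation formulas for closed and coclosed eigenvalues, again with a perturbation argument to reduce to the simple-spectrum case. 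Without some such explicit input, verified at the level of the variation formulas, your argument does not close.
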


This almost immediately implies 

\begin{cor}
\label{cor: meagre one but not two}
    The set of Riemannian metrics for which the Hodge Laplacian in dimension $3$ does not have simple nonzero spectrum is one of meagre codimension $1$ but not of meagre codimension $2$.
\end{cor}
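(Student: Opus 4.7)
The plan is to combine two ingredients already present in (or announced by) the paper: the theorem of Enciso and Peralta-Salas \cite{Enciso2012} gives meagre codimension at least $1$, while Theorem \ref{theorem: Hodge Laplacian 1-parameter genericity} obstructs meagre codimension $2$.

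First I would invoke the main theorem of \cite{Enciso2012}: the set of $\mathcal{C}^k$ Riemannian metrics on a closed oriented $3$-manifold for which the nonzero spectrum of the Hodge Laplacian is simple is residual in $\mathcal{G}^l$. In the Teytel-type framework developed earlier in this article, residuality of the complement is exactly the statement that the ``bad'' set of metrics with some nonsimple nonzero Hodge eigenvalue has meagre codimension at least $1$.

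Second I would argue that this bad set cannot have meagre codimension $2$. With the generalization of Teytel's definition used here, a set of meagre codimension $2$ has the property that, for any two fixed endpoints $g_0,g_1$ in its complement, a residual subset of paths in $W^k$ avoids the bad set at every interior time. But Theorem \ref{theorem: Hodge Laplacian 1-parameter genericity} produces explicit pairs of metrics $g_0,g_1$ on $\mathbb{S}^3$ for which \emph{every} path connecting them contains some $t\in(0,1)$ at which two eigenvalues of the Hodge Laplacian collide: either one coming from a positive and one from a negative eigenvalue of the associated Beltrami operators, or a closed and a coclosed eigenvalue. Since no path avoids the bad set for these endpoints, the bad set cannot be of meagre codimension $2$.

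The real obstacle does not lie in the corollary itself, which is a short formal deduction from the preceding theorem together with the Enciso--Peralta-Salas result, but in the construction of the explicit endpoint pairs underlying Theorem \ref{theorem: Hodge Laplacian 1-parameter genericity} and in setting up the generalized notion of meagre codimension so that the two halves of the statement speak about the same quantity. Once those endpoint pairs, the obstruction to avoiding crossings, and the definitional framework are in hand, it is only a matter of unwinding the definition to conclude.
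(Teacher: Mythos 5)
Your proposal is correct and follows essentially the same route as the paper: the meagre codimension $1$ part is the Enciso--Peralta-Salas residuality result combined with Lemma \ref{lemma:meagre}, and the failure of meagre codimension $2$ is deduced from the explicit endpoint pairs of Lemmas \ref{lem: nonsimplicity of coclosed spectrum along one parameter families} and \ref{lemma:example of nonsimple families}, since meagre codimension $2$ would force generic paths to avoid the non-simple set. The only cosmetic difference is that you phrase the path-avoidance consequence for endpoints in the complement, whereas the argument via Lemma \ref{lemma:codimension} needs no such restriction; this does not affect correctness.
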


which roughly means that simplicity of the spectrum of the Hodge Laplacian is a property that holds for the complement of a codimension $1$, but not a codimension $2$, set in the space of Riemannian metrics. The notion of meagre codimension is meant to capture the fact that some subset is not exactly a manifold of that codimension but that it it behaves like one under projections to subspaces of the parameter manifold so as to also include countable unions of manifolds of a given codimension. This will be made precise in subsection \ref{subsection:meagre codimension}.\par

Our approach is different from the method employed in both the Uhlenbeck and the Enciso and Peralta-Salas paper. Instead we adapt an idea that has been pioneered by de Verdière \cite{Verdière1988} and made more user friendly by Teytel \cite{Teytel1999}. The main insight is that, given a family of self-adjoint operators which differentiably depend on a parameter living in a separable Banach manifold, one can try to find local defining functions for the submanifold of parameter values which have double (or higher multiplicity) eigenvalues provided certain transversality conditions are satisfied. In this fortunate case, we can describe this non-simple subset as a set of meagre codimension $2$. Teytel's genericity criterion has found a number of applications, for example for proving generic eigenvalue properties of the Laplace-Neumann operator \cite{GomesMarrocos2019}.\par
An issue with this approach is that Teytel proved this theorem for a family of operators that are self-adjoint with respect to \emph{the same} inner product, whereas many classes of geometric operators we care about, such as the Hodge Laplacian, are self-adjoint with respect to the inner product induced by the Riemannian metric for which the operators are defined. It turns out that this difficulty is overcome rather easily as soon as the correct generalizations are chosen. Specifically, we obtain the following variation of Teytel's Theorem \cite[Theorem A]{Teytel1999}

\begin{thm}
    \label{theorem: meagre codimension criterion}
Let $A(q)$ be a family of operators whose resolvents $R_{A}(q)$ depend Fr\'echet-differentiably on a parameter $q$ that belongs to a separable Banach manifold $\mathcal{X}$, each densely defined on the same real Banach space $\mathcal{H}$. Furthermore every $A(q)$ is self-adjoint with respect to a differentiable family of inner products $\langle-,-\rangle_q$ defined on $\mathcal{H}$, each inducing norms equivalent to that of $\mathcal{H}$. Assume that the spectrum of each operator $A(q)$ is discrete, of finite multiplicity, and with no finite accumulation points. Assume also that the family $A(q)$ satisfies SAH$2$. Then the set of all $q$ such that $A(q)$ has a repeated eigenvalue has meagre codimension $2$ in $\mathcal{X}$.
\end{thm}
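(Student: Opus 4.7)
The plan is to reduce the varying inner product situation to Teytel's original setting by a suitable conjugation. Fix a reference parameter $q_0\in\mathcal{X}$ and set $\langle -,-\rangle_0:=\langle -,-\rangle_{q_0}$. Since $\{\langle -,-\rangle_q\}$ is a differentiable family of inner products on $\mathcal{H}$, there exists a differentiable family of positive self-adjoint (with respect to $\langle -,-\rangle_0$) operators $B(q)$ with $\langle u,v\rangle_q=\langle B(q)u,v\rangle_0$; define $T(q):=B(q)^{1/2}$. A short computation shows that the self-adjointness of $A(q)$ with respect to $\langle -,-\rangle_q$ is equivalent to the self-adjointness of
\[
\tilde A(q):=T(q)\,A(q)\,T(q)^{-1}
\]
with respect to the fixed inner product $\langle -,-\rangle_0$. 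Because conjugation by a bounded invertible operator preserves spectrum and multiplicities, it is enough to prove the theorem for $\tilde A(q)$.

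Next I would verify that $\tilde A(q)$ satisfies the hypotheses of Teytel's original theorem from \cite{Teytel1999}. Discreteness, finite multiplicity, and absence of finite accumulation points of the spectrum transfer automatically under conjugation. The Fr\'echet differentiability of $q\mapsto R_{\tilde A}(q)$ follows from that of $R_A(q)$, together with the chain rule and the differentiability of $T(q)$ and $T(q)^{-1}$; concretely, one writes
\[
R_{\tilde A}(q,\lambda)=T(q)\,R_A(q,\lambda)\,T(q)^{-1},
\]
and differentiates, using that $q\mapsto B(q)$ is differentiable with values in bounded operators so $q\mapsto B(q)^{1/2}$ and $q\mapsto B(q)^{-1/2}$ are as well (via the Dunford functional calculus applied away from $0$).

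The heart of the argument, and the step I expect to be the main obstacle, is checking that SAH2 is preserved under the passage from $A(q)$ to $\tilde A(q)$. SAH2 is a transversality condition asserting, at each point where an eigenvalue has multiplicity $\geq 2$, that the variation $\partial_q A(q)$ projected onto the corresponding eigenspace surjects onto the space of symmetric endomorphisms of that eigenspace. Under the conjugation, if $\varphi$ is an eigenvector of $A(q)$ with eigenvalue $\lambda$, then $T(q)\varphi$ is an eigenvector of $\tilde A(q)$ with the same eigenvalue, and a direct computation gives
\[
\partial_q\tilde A(q)=(\partial_q T)\,A\,T^{-1}+T\,(\partial_q A)\,T^{-1}-T\,A\,T^{-1}(\partial_q T)\,T^{-1}.
\]
On a multiplicity-$\geq 2$ eigenspace the first and third terms are rank-preserving, symmetric modifications by the fixed tensor $(\partial_q T)T^{-1}$, so the image of the map sending a variation to the induced endomorphism of the eigenspace is transformed isomorphically. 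Hence SAH2 for $A$ is equivalent to SAH2 for $\tilde A$.

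With these three items in place, Teytel's theorem applied to $\tilde A(q)$ yields that the set of $q$ where $\tilde A(q)$ has a repeated eigenvalue has meagre codimension $2$ in $\mathcal{X}$; since this set coincides with the corresponding set for $A(q)$, the theorem follows. To make the argument fully self-contained, one would also verify that the definition of meagre codimension (given in the forthcoming subsection \ref{subsection:meagre codimension}) is preserved under the identity map, which is trivial, so no additional work is needed beyond the three points above.
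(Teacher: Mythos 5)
Your conjugation argument is correct in substance, but it takes a genuinely different route from the paper. The paper does not reduce to Teytel's original theorem by trivializing the family of inner products; instead it reruns Teytel's construction directly in the varying-inner-product setting: it builds the spectral projection $P(q)$, the comparison map $S(q)=P(q)\circ P(q_0)$ and the local defining function $f(q)=S(q)^{-1}R_{A}(q)S(q)$, and observes that the only place the fixed inner product entered Teytel's argument is the computation of $f^{\prime}(q_0)$, where the commutator term $[S^{\prime}(q_0),R_{A}(q_0)]$ has vanishing matrix elements on the eigenspace because $R_{A}(q_0)$ is symmetric with respect to $\langle-,-\rangle_{q_0}$; since none of Teytel's arguments use that $f$ lands in the symmetric rather than in all endomorphisms, his proof then carries over verbatim. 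Your reduction via $T(q)=B(q)^{1/2}$ is essentially the ``isometric trivialization'' that the paper itself mentions --- and dismisses as impractical --- in the closing remark of subsection \ref{subsection:the genericity result}: impractical for concrete applications, because one must then compute the derivative of the chart-dependent transformed family, but perfectly viable as a proof of the abstract theorem, which is what buys you the reduction to a citable result. One caveat: your stated justification for the preservation of SAH$2$ (\emph{rank-preserving, symmetric modifications \dots transformed isomorphically}) is not the actual reason and should be replaced by the computation that both approaches secretly share. At a parameter $q_1$ the terms $(\partial_q T)AT^{-1}$ and $-TAT^{-1}(\partial_q T)T^{-1}$ combine into the commutator $[(\partial_q T)T^{-1},\tilde A]$, whose compression to any eigenspace of $\tilde A(q_1)$ vanishes because $\tilde A(q_1)$ acts there as $\lambda\cdot Id$ and is self-adjoint for $\langle-,-\rangle_0$; one then checks $\langle\tilde A^{\prime}[h]\,Tv_i,Tv_j\rangle_0=\langle A^{\prime}[h]\,v_i,v_j\rangle_{q_1}$ for eigenvectors $v_i,v_j$ in the same eigenspace, which is exactly the cancellation the paper performs for $[S^{\prime}(q_0),R_{A}(q_0)]$. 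You should also note that $\tilde A(q)$ has domain $T(q)\mathcal{D}(A)$, which need not equal $\mathcal{D}(A)$ (harmless if one phrases everything in terms of the bounded resolvents, as both Teytel and the paper do), and that SAH$2$ in the paper is phrased via the resolvent, so you need the paper's remark that the resolvent and operator versions of SAH$2$ agree up to a constant factor.
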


Here the condition SAH$2$ derives from the ``strong Arnold hypothesis'' introduced by de Verdière in \cite{Verdière1988} and is essentially the transversality condition needed in order to conclude that we find the local defining functions mentioned above. This will be made precise in section \ref{subsection:the genericity result}, definition \ref{def: SAH2}. This approach is chosen since it would be unclear how to use the Uhlenbeck and Enciso and Peralta-Salas method in order to prove Theorem \ref{theorem:main}. Apart from that, however, it gives us stronger statements than we would get by using the former ideas. In particular, the de Verdière-Teytel method allows one to actually determine the (meagre) codimension of the set of Riemannian metric for which some family of geometric operators has eigenvalues of a given multiplicity, and not just prove that it is meagre. Further, Uhlenbeck's method is only powerful enough to prove that generic $1$-parameter families $g_t$ of Riemannian metrics connecting metrics $g_0$ and $g_1$ with simple spectrum have simple spectrum for all $t$, but no such restrictions on $g_0$ and $g_1$ exist if one uses Theorem \ref{theorem: meagre codimension criterion}.\par

We wish to remark that while the curl operator is defined on $k$ forms on manifolds of dimension $2k+1$, it is not true that the results presented here readily generalize to higher odd dimensions. Indeed it was proven by Gier and Hislop \cite{GierHislop2016} that the curl operator in dimension $5$ has generically simple spectrum, but because of the skew symmetry of this operator in dimension $4k+1$, this implies that its square, the Hodge Laplacian restricted to the coexact $2$-forms in dimension $5$, generically has $2$-dimensional eigenspaces. The authors of \cite{GierHislop2016} conjecture that the coexact spectrum of the Hodge Laplacian on $k$ forms in dimension $2k+1$ is generically $2$ dimensional when $k$ is even and generically $1$ dimensional when $k$ is odd.\par
Even if one aims to prove the generic simplicity of the spectrum of the curl operator in higher dimensions using the techniques presented here one will have to verify the SAH$2$-condition in a different way as we really do use the fact that the curl operator in dimension $3$ is defined on $1$-forms.\par

This papers is organized as follows: we will first recall the notion of meagre codimension as introduced by Teytel in \cite{Teytel1999} in subsection \ref{subsection:meagre codimension}, then review the setup for the de Verdière-Teytel method and introduce the necessary modifications in \ref{subsection:the genericity result}. Following this we apply Theorem \ref{theorem: meagre codimension criterion} to the curl operator, proving that simplicity of the coexact spectrum holds in the complement of a meagre codimension $2$ subset. We then use the fact that this approach immediately lends itself to the application of the study of $k$-parameter families of operators in order to prove Theorem \ref{theorem:main}.\\
Finally we construct counterexamples to simplicity of the Hodge Laplacian along generic $1$-parameter families of Riemannian metrics mentioned above and thereby prove Theorem \ref{theorem: Hodge Laplacian 1-parameter genericity} in subsection \ref{subsection: the one parameter family arguments}.

\section*{Acknowledgements}
I want to thank my advisor Vera V\'ertesi as well as Michael Eichmair for their constant support and helpful mentoring as well as the Vienna School of Mathematics for providing a stable and pleasant research environment. I am also grateful to Daniel Peralta-Salas, Junya Takahashi, and John Etnyre for fruitful discussion and the provision of essential references. Special thanks go to my good friend Josef Greilhuber for a particularly careful reading of an earlier version of the present paper and many helpful remarks. Finally the author would like to thank the anonymous referees for helpful comments and suggestions.\\
This research was funded in part by the Austrian Science Fund (FWF) [10.55776/P34318] and [10.55776/Y963]. For open access purposes, the author has applied a CC BY public copyright license to any author-accepted manuscript version arising from this submission.

\section{Preliminaries}
\label{section:Preliminaries}

\subsection{Meagre Codimension}
\label{subsection:meagre codimension}
In this section, we recall the definition of meagre codimension and some of its basic properties from \cite{Teytel1999}. We also prove a simple technical lemma that we will need in Section \ref{subsection: the one parameter family arguments}. 

\begin{definition}{\cite{Teytel1999}}
     Let $X$ be a Banach space and $Z\subset X$ be a hyperplane of codimension $n$. Then a differentiable map $\pi:X\to Z$ is called a nonlinear projection if, for every $x\in X$, $\pi^\prime_x:X\to Z$ is surjection and has kernel of dimension $n$.
\end{definition}
   
Using these nonlinear projections allows Teytel to quantify more exactly how small a set is. 

\begin{definition}{\cite{Teytel1999}}
        A subset $Y\subset X$ is said to be of meagre codimension $n$ if $\pi(Y)\subset Z$ is meagre in $Z$ for every hyperplane $Z$ of codimension $n-1$ and every nonlinear projection $\pi:X\to Z$.
\end{definition}

There is a natural way of extending this definition from Banach spaces to Banach manifolds.

\begin{definition}{\cite{Teytel1999}}
    Given a Banach manifold $M$, we say that a set $Y\subset M$ has meagre codimension $n$ if for every chart $(\phi,U)$ of $M$, $\phi(U\cap Y)$ has meagre codimension $n$ in $\phi(U)$. 
\end{definition}

Examples of subsets of meagre codimension $n$ of a Banach space $X$ include codimension $n$ hyperplanes and smooth codimension $n$ submanifolds of $X$. Clearly, countable unions of sets of meagre codimension $n$ are again sets of meagre codimension $n$, and sets of meagre codimension $n$ are also sets of meagre codimension $m$ for all $m\leq n$.\par

We will need the following Lemmas proven by Teytel:
\begin{lem}{\cite{Teytel1999}}
\label{lemma:meagre}
A subset $Y$ of a separable Banach space is of meagre codimension $1$ iff it is meagre.
\end{lem}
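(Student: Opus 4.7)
The plan is to unpack the definition of meagre codimension at $n=1$. A hyperplane of codimension $n-1=0$ can only be $Z = X$ itself, so a nonlinear projection in the sense of Teytel becomes a differentiable map $\pi: X \to X$ with $\pi'_x$ surjective and $\ker \pi'_x = \{0\}$ at every point. By the open mapping theorem each $\pi'_x$ is then a toplinear isomorphism, and the inverse function theorem in Banach spaces identifies $\pi$ as a local diffeomorphism. Hence the statement ``$Y$ is of meagre codimension $1$'' is equivalent to ``$\pi(Y)$ is meagre in $X$ for every local diffeomorphism $\pi$ of $X$''.

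One direction is immediate: the identity $\mathrm{id}_X$ is itself a nonlinear projection, so if $Y$ is of meagre codimension $1$ then $Y = \mathrm{id}_X(Y)$ must be meagre.

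For the converse, assume $Y$ is meagre and let $\pi: X \to X$ be any local diffeomorphism. The key tool is the Lindelöf property: a separable metric space is Lindelöf, so I can cover $X$ by countably many open sets $U_n$ on each of which $\pi$ restricts to a diffeomorphism onto its (open) image $V_n := \pi(U_n)$. Writing
\begin{equation*}
\pi(Y) \;=\; \bigcup_{n} \pi(Y\cap U_n),
\end{equation*}
it suffices to show each $\pi(Y\cap U_n)$ is meagre in $X$. Since $Y\cap U_n$ is meagre in $U_n$ and $\pi|_{U_n}:U_n\to V_n$ is a homeomorphism, $\pi(Y\cap U_n)$ is meagre in the open subset $V_n \subset X$.

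The main and really only nontrivial step is then to transfer meagreness from an open subset of $X$ back to $X$ itself. I would verify directly that a nowhere dense subset $N$ of an open set $V\subset X$ remains nowhere dense in $X$: if a nonempty open $W\subset X$ were contained in the $X$-closure of $N$, then $W\cap V$ would be an open subset of $V$ sitting inside the $V$-closure of $N$ and therefore empty, which forces $W \subset \overline{V}\setminus V = \partial V$, contradicting that $W$ is open and nonempty (the boundary of an open set having empty interior). Granted this topological fact, $\pi(Y)$ is a countable union of meagre subsets of $X$ and hence meagre, which completes the converse. I do not expect any genuine obstacle here; the separability hypothesis enters exactly once, in invoking Lindelöf to produce the countable cover.
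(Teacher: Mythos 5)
Your proof is correct and is essentially the argument of Teytel that the paper cites without reproducing: the forward direction via the identity projection, and the converse by observing that a nonlinear projection onto a codimension-$0$ hyperplane is a local diffeomorphism, then using separability (Lindel\"of) to reduce to countably many charts on which meagreness is preserved. The topological step that a set nowhere dense in an open $V\subset X$ is nowhere dense in $X$ is handled correctly, so no gaps remain.
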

The proof carries over almost verbatim to subsets of separable Banach manifolds. 
\begin{lem}
\label{lem:codimension intersection}{\cite{Teytel1999}}
Let $Y$ be a set of meagre codimension $n$ and let $M$ be a submanifold of $X$ of codimension $k< n$. Then $Y\cap M$ is a set of meagre codimension $n-k$ in $M$.
\end{lem}
\begin{lem}{\cite{Teytel1999}}
\label{lem:meagre codim local reduction}
Let $X$ be a separable Banach manifold and $Y$ be a subset of $X$. Suppose that for every $q\in Y$ there exists a neighbourhood $U_q$ of $q$ such that $Y\cap U_q$ is of meagre codimension $n$ in $U_q$. Then $Y$ is of meagre codimension $n$ in $X$.
\end{lem}
Finally we prove a Lemma we will use for the $1$-parameter arguments.

\begin{lem}
\label{lemma:codimension}
Let $X$ be a Banach manifold, and let $Y\subset X\times \mathbb{R}^k$ be a set of meagre codimension $n$ with $n\geq k$, and let $\pi:X\times \mathbb{R}^k\to X$ be a nonlinear projection. Then $\pi(Y)\subset X$ is a set of meagre codimension $n-k$.
\end{lem}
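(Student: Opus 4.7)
The plan is to verify the definition of meagre codimension $n-k$ for $\pi(Y)$ directly, by lifting each candidate witness pair $(Z,\sigma)$ on $X$ to a witness pair $(\tilde Z,\tilde\sigma)$ on $X\times\mathbb{R}^k$ to which the hypothesis on $Y$ can be applied. First I would reduce to the Banach-space setting: a chart $(\phi,U)$ on $X$ extends naturally to the product chart $(\phi\times\mathrm{id},\,U\times\mathbb{R}^k)$ on $X\times\mathbb{R}^k$, and with respect to these charts the hypothesis on $Y$ becomes a statement in the associated model Banach spaces. For transparency I will carry out the argument assuming $X=E$ is a Banach space and $\pi$ is the canonical first-factor projection; a genuinely nonlinear $\pi$ with $k$-dimensional kernel is treated the same way by composing with $\pi$ in place of $p_1$ and using that kernel dimensions of composed surjections add.

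Assuming this reduction, let $Z\subset E$ be a hyperplane of codimension $n-k-1$ with linear part $W$, and let $\sigma\colon E\to Z$ be an arbitrary nonlinear projection. I would then define
\[
\tilde\sigma\colon E\times\mathbb{R}^k\longrightarrow Z\times\{0\},\qquad \tilde\sigma(x,v):=(\sigma(x),0).
\]
The affine set $Z\times\{0\}$ is a hyperplane of codimension $(n-k-1)+k=n-1$ in $E\times\mathbb{R}^k$, and at every point the derivative of $\tilde\sigma$ is surjective onto $W\times\{0\}$ with kernel $\ker(\sigma')\times\mathbb{R}^k$ of dimension $n-1$. Hence $\tilde\sigma$ is a nonlinear projection in Teytel's sense, and the hypothesis that $Y$ has meagre codimension $n$ yields that $\tilde\sigma(Y)$ is meagre in $Z\times\{0\}$. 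Since $\tilde\sigma(Y)=\sigma(\pi(Y))\times\{0\}$, identifying $Z\times\{0\}$ with $Z$ gives that $\sigma(\pi(Y))$ is meagre in $Z$. Because $(Z,\sigma)$ was arbitrary, this is precisely the statement that $\pi(Y)$ has meagre codimension $n-k$ in $E$, and hence, after undoing the chart reduction, in $X$.

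The step requiring the most care is the codimension bookkeeping: one must confirm at each stage that the hyperplane $\tilde Z$ has codimension exactly $n-1$ and that $\tilde\sigma'$ has kernel of dimension exactly $n-1$, so that the meagre codimension $n$ hypothesis applies verbatim. The case of a genuinely nonlinear $\pi$ is the only real obstacle, and it reduces to checking that surjectivity and the dimension formula $\dim\ker(\sigma'\circ\pi')=\dim\ker(\sigma')+\dim\ker(\pi')$ persist when $\pi'$ is not merely a coordinate projection; this is routine linear algebra at each base point. The degenerate cases $n=k$ (meagre codimension $0$, which is vacuous) and $n=k+1$ (meagre codimension $1$, equivalent via Lemma~\ref{lemma:meagre} to meagreness) should be acknowledged but require no separate argument.
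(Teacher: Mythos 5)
Your proof is correct and follows essentially the same route as the paper's: reduce to product charts so that everything happens in a Banach space, and then exploit that $\pi$ (resp.\ its chart representative) is itself a nonlinear projection with $k$-dimensional kernel. You are in fact more explicit than the paper at the key step, composing an arbitrary witness projection $\sigma$ onto a codimension $n-k-1$ hyperplane with $\pi$ and checking that the result is a nonlinear projection onto a codimension $n-1$ hyperplane, which is exactly the bookkeeping the paper's terse appeal to ``the above lemmata'' leaves to the reader.
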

\begin{proof}
Let $(U_i,\phi_i)$ be a collection of charts covering $X$, and $(U_i\times \mathbb{R}^k,\psi_i)$ be the associated product charts. Then $\pi_{\psi_i}$, the chart representative of the nonlinear projection, is a nonlinear projection from $\psi_i(U_i\times \mathbb{R}^k)$ to $\psi_i(U_i\times \{0\})=\phi_i(U_i)\times \{0\}$. By the above lemmas, $\pi_{\psi_i} (\psi_i(Y\cap U_i\times \mathbb{R}^k))\subset \phi_i (U_i)$ is a set of codimension $n-k$. Since this is true for any choice of charts $(U_i,\phi_i)$ we are done.
\end{proof}

\subsection{The Genericity Result}
\label{subsection:the genericity result}
We adapt the de Verdière-Teytel construction. Let the parameter space $\mathcal{X}$ be a separable Banach manifold. Let $(\mathcal{H},\langle -,-\rangle_q)$ be a family of real Hilbert spaces, Fr\'echet-differentiably parametrized by $q\in\mathcal{X}$ with the same underlying real Banach space $\mathcal{H}$, i.e. all inner products induce equivalent norms. Let furthermore $A_q$ be linear operators satisfying:

\begin{itemize}
    \item $A_q$ is self-adjoint with respect to the inner product $\langle-,-\rangle_q$ for all $q\in\mathcal{X}$
    \item the spectrum $\sigma(A_q)$ of $A_q$ consists of countably many discrete eigenvalues, all of which have finite multiplicity for all $q\in\mathcal{X}$
    \item The resolvents $R_{A_q}(\mu)=(\mu-A_q)^{-1}$, $\mu\in\rho(A_q)$, of $A(q)$ depend Fr\'echet-differentiably on $q$ 
\end{itemize}

\begin{rmk}
    Since self adjoint operators are closed, the resolvents $R_{A_q}$ are bounded operators, so it is clear what Fr\'echet-differentiability means in this context. Teytel required the operators $A_q$ to depend Fr\'echet-differentiably on $q$ with respect to the graph norm, but we find this formulation to be slightly cleaner. Note that, unlike Teytel, we do not need to require the domains $D(A_q)$ to coincide for all $q\in \mathcal{X}$ since the resolvents are defined on all of $H$.
\end{rmk}

Fix some $q_0\in\mathcal{X}$ and an eigenvalue $\lambda$ of $A_{q_0}$ with multiplicity $m$. The general strategy will now be to find defining functions for the submanifold of parameter values $q$ close to $q_0$ for which the part of the spectrum of $A_q$ near $\lambda$ is not simple.\par
Since the spectrum of $A_{q_0}$ is discrete there exists an $\epsilon >0$ such that $\lambda$ is the only eigenvalue of $A_{q_0}$ in $(\lambda-\epsilon,\lambda+\epsilon)$. We now consider an open neighbourhood $\mathcal{U}(q_0)\subset \mathcal{X}$ of $q_0$ such that the spectrum of $A_q$ in $(\lambda-\epsilon,\lambda+\epsilon)$ consists of $n$ eigenvalues whose multiplicities sum to $m$. Such an open neighbourhood exists because $\sigma(A_q)$ depends continuously on $q$ \cite[pp. 372-373]{RieszNagy1955}. Note that the previous reference only deals with the case of families of operators that are self-adjoint with respect to the same inner product, but the same proof goes through in our case. Continuous dependence on $q$ also follows from the min-max characterization of eigenvalues of selfadjoint operators \cite[Theorem 4.10]{GeraldTeschl2014}.\par
We denote the associated sum of eigenspaces by $E(q)$ and define the spectral projection $P:\mathcal{U}(q_0)\times \mathcal{H}\to E(q)$ by
\begin{align*}
    P(q)=\frac{1}{2\pi i}\int_\gamma R_{A_q}(\gamma)\,d\gamma
\end{align*}
Where $\gamma$ is the simple closed curve $\lambda+\epsilon\,e^{it}$ in the complex plane. Note that, while all the $(\mathcal{H},\langle-,-\rangle_{q})$ are real Hilbert spaces, we can first complexify, then apply the operator $P(q)$, and then restrict to the real elements of the complexification. Furthermore, the projection $P(q_0)$ is an isomorphism from $E(q)$ to $E(q_0)$ for $q$ close enough to $q_0$. The fact that $P$ depends differentiably on the parameter $q$ follows from an application of dominated convergence.\par
Using this projection we define the map 
\begin{align*}
    S(q)=P(q)\circ P(q_0):E(q_0)\to E(q) 
\end{align*}
and the local defining function $f:\mathcal{U}(q_0)\subset\mathcal{X}\to GL(E(q_0))$ by
\begin{align*}
    f(q)=S(q)^{-1}R_{A}(q)S(q)
\end{align*}
Here we do not specify the argument of the resolvent since it does not matter and denote it by $R_{A}(q)=R_{A(q)}$. We do however want the argument to be real since we are working in a real Banach space. One can find a real argument $\mu$ so that the resolvent $R_{A(q)}(\mu)$ is defined for all $q\in \mathcal{U}(q_0)$ provided $\mathcal{U}(q_0)$ is chosen small enough.\par
Note that the preimage of the submanifold $\mathbb{R}\cdot Id\subset GL(E(q_0))$ under $f$ is precisely the set of parameter values for which there is a single eigenvalue in the interval $(\lambda-\epsilon,\lambda+\epsilon)$. This eigenvalue is necessarily of multiplicity $m$.\par
To see that this set is a submanifold we have to compute the derivative of $f$ at $q_0$. Using that $S(q_0)=Id$ one sees that this is given by
\begin{align*}
    f^{\prime}(q_0)=[S^{\prime}(q_0),R_{A}(q_0)]+R_{A}^{\prime}(q_0)
\end{align*}
Let $\{v_i\}_{1\leq i\leq m}$ be an orthonormal eigenbasis of $(E(q_0),\langle-,-\rangle_{q_0})$ for the operator $A_{q_0}$. Given an element $h\in T_{q_0}\mathcal{X}$ we can now express the endomorphism $f^{\prime}(q_0)[h]$ in this basis and see that $\langle [S^{\prime}(q_0),R_{A}(q_0)][h] v_i,v_j\rangle_{q_0}=0$ by the fact that $R_{A}(q_0)$ is symmetric with respect to $\langle-,-\rangle_{q_0}$. Thus
\begin{align*}
    f^\prime_{ij}(h):=\langle f^{\prime}(q_0)[h] v_i,v_j\rangle_{q_0}=\langle R_{A}^{\prime}(q_0)[h] v_i,v_j\rangle_{q_0}
\end{align*}

We can now define the condition SAH$2$

\begin{definition}{\cite{Teytel1999}}
\label{def: SAH2}
Let $\mathcal{H}$ be a real Hilbert space, and $\lambda$ be an eigenvalue of $A(q)$ of multiplicity $n\geq 2$. We say that the family $A(q)$ satisfies the condition SAH$2$ if there exist two orthonormal eigenvectors $v_1$ and $v_2$ of eigenvalue $\lambda$ such that the linear functionals $f^\prime_{11}-f^\prime_{12}$ and $f^\prime_{12}$ are linearly independent.
\end{definition}

\begin{rmk}
The spectrum of an operator $A$ is simple iff the spectrum of its resolvent $R_{A}$ is simple. Moreover, whenever the derivative of $A_q$ exists in some suitable sense, $\langle R_{A}^{\prime}(q_0)[h] v_i,v_j\rangle_{q_0}$ and $\langle A^{\prime}(q_0)[h] v_i,v_j\rangle_{q_0}$ only differ by a constant factor so in that case $R_{A}(q)$ satisfies the condition SAH$2$ iff $A(q)$ does. 
\end{rmk}

\begin{rmk}
  In Teytel's setting, $A^\prime (q_0)$ always maps to $\mathcal{L}(E(q_0))$, the space of symmetric endomorphisms on $E(q_0)$. This means that if the multiplicity of $\lambda$ is two, the linear functions appearing in condition SAH$2$ precisely span all directions which are transverse to $\mathbb{R}\cdot Id$. In our setting there is one more direction that $A^\prime(q)$ could potentially map to, however for all examples known to us, $A^\prime (q_0)$ also maps to $\mathcal{L}(E(q_0))$.
\end{rmk}

\begin{proof}[Proof of Theorem \ref{theorem: meagre codimension criterion}]
The proof can be copied almost verbatim from the one provided by Teytel in \cite[Chapter $3$]{Teytel1999}, the only difference being that $f$ maps to $GL(E(q_0))$ as opposed to $\mathcal{L}(E(q_0))$. For the sake of completeness we outline the most important steps leading up to that point. The idea is to filter the set $\mathcal{D}\subset X$ of metrics for which the operator $A$ does not have simple spectrum by the multiplicity of the non-simple eigenvalues, and then show that locally $\mathcal{D}$ has meagre codimension 2, which then allows us to conclude the argument by appealing to Lemma \ref{lem:meagre codim local reduction}.\\
Given a finite open interval $I$, we define the set $\mathcal{D}_{n,I}\subset X$ as the collection of parameters $q\in X$ such that $A_q$ has an eigenvalue $\lambda$ of multiplicity $n$ but no eigenvalue of higher multiplicity contained in $I$. Then
\begin{align*}
    \mathcal{D}=\bigcup_{N\in \mathbb{N}}\bigcup_{n>1} \mathcal{D}_{n,(-N,N)}
\end{align*}
and so if we show that for all $n>1$ and all finite intervals $I$, $\mathcal{D}_{n.I}$ has meagre codimension $2$, we are done since the countable union of sets of meagre codimension $2$ is again a set of meagre codimension $2$. Fixing $q_0\in \mathcal{D}$ we note that any finite interval $I$ only contains finitely many eigenvalues of $A_{q_0}$. Restricting to those eigenvalues $\lambda_1,\dots, \lambda_s$ with multiplicity $m(\lambda_i)\leq n$ and using the fact that eigenvalues move continuously with the parameter we see that there exists a small neighbourhood $U_{q_0}$ of $q_0$ so that $D_{n,I}\cap U_{q_0}=\bigcup_{i=1}^{s}D_{n,(a_i,b_i)}\cap U_{q_0}$, where the sum of multiplicities of eigenvalues in $(a_i,b_i)$ is $m(\lambda_i)$ for all $A_q$ with $q$ in $U_{q_0}$. It thus suffices to show that $U_{q_0}\cap\mathcal{D}_{n,(a,b)}$ with $(a,b)$ containing eigenvalues of total multiplicity $n$ has meagre codimension $2$ in $U_{q_0}$.\\
When $n=2$ then $A_q$ satisfying SAH$2$ gives two linearly independent defining equations, thus implying that $\mathcal{D}_{n,(a,b)}\cap U_{q_0} = f^{-1}(\mathbb{R}\cdot Id)\cap U_{q_0}$ is a submanifold of codimension $2$ for $U_{q_0}$ small enough, which is a set of meagre codimension $2$.\\
When $n>2$ consider the set $E^\prime(\mathbb{R}^n)\subset End(\mathbb{R}^n)$ of all $n\times n$ matrices $L$ with entries $L_{1,1}$, $L_{1,2}$, $L_{2,1}$, and $L_{2,2}$ equal to zero, which is a space of dimension $n^2-4$. We define the differentiable map $\Bar{f}:E^\prime (\mathbb{R}^n)\times U_{q_0} \to End(\mathbb{R}^n)$ by $\Bar{f}(L,q)=L+f(q)$ which is transverse to $\mathbb{R}\cdot Id\subset End(\mathbb{R}^n)$ plus some additional direction $\langle r\rangle$ at $(0,q_0)$ by the SAH$2$ condition and the fact that the image of $f$ has at most dimension $2$. Therefore $\Bar{f}^{-1}(\mathbb{R}\cdot Id \oplus \langle r\rangle)\subset L^\prime (\mathbb{R}^n)\times U_{q_0}$ is a submanifold of codimension $n^2-2$ in a neighbourhood of $(0,q_0)$. The intersection of this submanifold with $U_{q_0}$ contains $\mathcal{D}_{n,(a,b)}\cap U_{q_0}$, and Lemma \ref{lem:codimension intersection} implies that this intersection has meagre codimension $(n^2-2)-(n^2-4)=2$ in $U_{q_0}$. This concludes the proof that $\mathcal{D}\subset X$ is a set of meagre codimension $2$.
\end{proof}

\begin{rmk}
\begin{enumerate}
    \item One can easily extend Theorem \ref{theorem: meagre codimension criterion} to complex Hilbert spaces.
    \item A reading of Teytel's proof makes clear that it is not the condition SAH$2$ that really matters, but the number $k$ of linearly independent directions of in $Im(A^\prime)$ which are transverse to $\mathbb{R}\cdot Id$ (and whose span does not contain $Id$). This number $k$ directly translates into non-simplicity of the spectrum being a meagre codimension $k$ property.
    \item We want to point out that a different approach to adapting Teytel's theorem to the case of differentiably varying inner products on the same Banach space would be to try to locally isometrically trivialize the bundle $(H\to \mathcal{X})$, consisting of fibers $H_q=(\mathcal{H},\langle-,-\rangle_{q})$ over the basepoint $q$. Supposing $\mathcal{H}$ is separable, an idea for doing this would be to fix a countable orthonormal basis (with respect to any inner product) and then apply the Gram-Schmidt procedure with respect to all $\langle-,-\rangle_q$ and hope that the resulting local frame for $H$ is smooth. Then one could consider the family $A(q)$ in this local bundle chart and try to apply Teytel's Theorem directly. Actually applying this idea to a concrete family of operators however is much more difficult than our approach as one would have to compute the Fr\'echet derivative of this transformed family of operators which of course depends on the chart one chose, so this does not seem to be useful in practice.
\end{enumerate}
\end{rmk}

\section{Main Results}
\label{section:main result}

\subsection{The curl operator has simple spectrum in the complement of a subset of meagre codimension $2$}
\label{subsection: the coclosed spectrum being simple is a meagre codimension 2 property}
We now specialize to the case $\mathcal{X}=\mathcal{G}^k$, the set of $\mathcal{C}^k$ Riemannian metrics on a closed oriented $3$-manifold $M$, endowed with the $\mathcal{C}^k$ topology. This constitutes a separable smooth Banach manifold, its tangent space $T_g \mathcal{G}^k$ at a point $g$ can be identified with $\mathcal{S}^k(M)$, the set of symmetric $(0,2)$ tensor fields of differentiability class $k$ on $M$.\par
Consider the curl operator $A_g:=\ast_g d$, the square root of $\Delta_g$ on the coexact $1$-forms. Our strategy is to show that Theorem \ref{theorem: meagre codimension criterion} applies to the curl operator.\\
The curl operator is well known to be symmetric with respect to the inner product $\langle \alpha,\beta\rangle_g=\int_M g(\alpha,\beta)\, d\mu_g$ and it admits a self-adjoint extension to a densely defined subspace of $\mathcal{H}=L^2(\Omega^1(M))$. In particular all of its eigenvalues are real and all of its eigenforms are eigenforms of $\Delta_g$ and so they are smooth $1$-forms.\\
Moreover its eigenvalues only accumulate at infinity and its eigenspaces except for the eigenvalue $0$ are finite dimensional. In order to deal with the infinite dimensional kernel of the curl operator $A_g$ we restrict it to the subspace $\mathcal{H}_g=ker(d)^{\perp_g}$. All of these Hilbert spaces are equipped with the induced inner products $\langle-,-\rangle_g$ mentioned above and can be identified with $\mathcal{H}/ker(d)$ as Banach spaces.\par
We thus see that the curl operator is of the type considered in Theorem \ref{theorem: meagre codimension criterion} and we are left with analyzing its derivative. This derivative was computed in \cite{Enciso2012}.

\begin{lem}{\cite[Lemma 2.1.]{Enciso2012}}
    Let $u$ be an eigenform of $A_g=\ast_g d$ of eigenvalue $\lambda$. Then the variation of $A_g$ in direction $h$ is given by 
    \begin{align*}
        A^\prime_g\,[h]\,u=\lambda\,h(\sharp_{g} u, -)-\frac{\lambda}{2}\,tr_g (h)\,u
    \end{align*}
where $\sharp_g$ is the canonical isomorphism of the tangent and the cotangent bundle induced by the metric $g$.
\end{lem}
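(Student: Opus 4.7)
The plan reduces everything to the variation of the Hodge star, since the exterior derivative $d$ does not depend on the metric. Writing $A_g=\ast_g\circ d$ and using the eigenequation $du=\lambda\ast_g u$, one has
$$A'_g[h]\,u=(\ast_g)'[h]\,du=\lambda\,(\ast_g)'[h]\,\ast_g u,$$
so the entire problem is to compute $(\ast_g)'[h]$ evaluated on the 2-form $\ast_g u$. I would first handle the easier case of $(\ast_g)'[h]$ on a 1-form and then bridge to 2-forms via the dimension-$3$ identity $\ast_g\ast_g=\mathrm{id}$ on $\Omega^1(M)$.

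For the first step I would start from the defining property of the Hodge star: for every 1-form $\alpha$,
$$\alpha\wedge \ast_g u=g(\alpha,u)\,d\mu_g.$$
Differentiate both sides in $g$ along $h$, using the standard identities $(g^{ij})'[h]=-h^{ij}$ and $(d\mu_g)'[h]=\tfrac{1}{2}\mathrm{tr}_g(h)\,d\mu_g$. The right-hand side becomes
$$\left(-h(\sharp_g\alpha,\sharp_g u)+\tfrac{1}{2}\mathrm{tr}_g(h)\,g(\alpha,u)\right)d\mu_g.$$
A short rearrangement using $h(\sharp_g\alpha,\sharp_g u)=g\bigl(\alpha,h(\sharp_g u,-)\bigr)$ identifies this with $\alpha\wedge\ast_g\!\bigl(\tfrac{1}{2}\mathrm{tr}_g(h)\,u-h(\sharp_g u,-)\bigr)$. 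Since the identity holds for every $\alpha$, non-degeneracy of the wedge pairing gives
$$(\ast_g)'[h]\,u=\ast_g\!\left(\tfrac{1}{2}\mathrm{tr}_g(h)\,u-h(\sharp_g u,-)\right).$$

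For the second step I would differentiate the identity $\ast_g\ast_g=\mathrm{id}$ (valid on $\Omega^1(M)$ in dimension $3$) to obtain $(\ast_g)'[h]\ast_g u=-\ast_g\bigl((\ast_g)'[h]\,u\bigr)$. Substituting the formula from the previous step and applying $\ast_g\ast_g=\mathrm{id}$ once more cancels the outer Hodge stars and flips the sign, yielding
$$(\ast_g)'[h]\ast_g u=h(\sharp_g u,-)-\tfrac{1}{2}\mathrm{tr}_g(h)\,u,$$
and multiplication by $\lambda$ produces the claimed expression. The only real obstacle is careful bookkeeping with raising and lowering of indices and with tracking where $h$ interacts with the inner product on forms versus the volume form; the use of the eigenequation $du=\lambda\ast_g u$ is precisely what allows us to avoid ever having to compute $(\ast_g)'[h]$ directly on a generic 2-form.
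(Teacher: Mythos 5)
Your derivation is correct: the sign conventions $(g^{ij})'[h]=-h^{ij}$ and $(d\mu_g)'[h]=\tfrac{1}{2}\mathrm{tr}_g(h)\,d\mu_g$ are right, the identity $h(\sharp_g\alpha,\sharp_g u)=g(\alpha,h(\sharp_g u,-))$ holds, and differentiating $\ast_g\ast_g=\mathrm{id}$ correctly transfers the formula from $1$-forms to $2$-forms, yielding exactly the stated expression. The paper itself gives no proof of this lemma (it is quoted from Enciso and Peralta--Salas), and your argument via the variation of the Hodge star together with the eigenequation $du=\lambda\ast_g u$ is essentially the computation carried out in that reference, so there is nothing to object to.
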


\begin{rmk}
    We can see that the derivative $A^\prime_g$ of $A_g$ maps to the symmetric operators on $E(g)$. Therefore the best we can hope for is that the subspace of metrics with non-simple spectrum constitute a set of meagre codimension two. The same holds true for $\Delta_g$ restricted to the coexact spectrum as we will soon see.
\end{rmk}

Now let $\lambda$ be an eigenvalue of multiplicity $2$ and $v_1$ and $v_2$ an orthonormal basis for the associated eigenspace $E(g)$ of $A_g$. Our approach is to make special choices of $h$ and show that $A^\prime_g$ applied to these $h$ already spans a $2$-dimensional vector space transverse to $\mathbb{R}\cdot Id \subset \mathcal{L}(E(g))$. It turns out that the symmetric tensor product of the basis vectors, denoted by $v_i\odot v_j$, leads to particularly nice expressions, and so these will be our $h$.\par

Using $v_i\odot v_j(\sharp_g v_k)=\frac{1}{2}(g(v_i,v_k)v_j+g(v_j,v_k)v_i)$ and $tr_g(v_i\odot v_j)=g(v_i,v_j)$ we compute 
\begin{align*}
 &\langle A^{\prime}[v_i\odot v_j]v_k,v_l\rangle=\\
 &\frac{\lambda}{2}\,\int_M \bigg(g(v_i,v_k)\,g(v_j,v_l)+g(v_j,v_k)\,g(v_i,v_l)-g(v_i,v_j)\,g(v_k,v_l)\bigg)\,d\mu_g   
\end{align*}
We represent the linear maps $A^{\prime}[v_1\odot v_1]$ and $A^{\prime}[v_1\odot v_2]$ in this basis and obtain
\begin{align*}
 A^{\prime}[v_1\odot v_1]=  \frac{\lambda}{2} \begin{pmatrix}
        \int_M \lVert v_1\rVert^4\,d\mu_g &\int_M \lVert v_1\rVert^2\,g(v_1,v_2)\,d\mu_g\\
        \int_M \lVert v_1\rVert^2\,g(v_1,v_2)\,d\mu_g&2\int_M g(v_1,v_2)^2\,d\mu_g-\int_M \lVert v_1\rVert^2\,\lVert v_2\rVert^2\,d\mu_g
    \end{pmatrix}
\end{align*}
\begin{align*}
    A^{\prime}[v_1\odot v_2]=\frac{\lambda}{2}\,
\begin{pmatrix}
    \int_M \lVert v_1\rVert ^2\,g(v_1,v_2)\,d\mu_g & \int_M \lVert v_1\rVert^2 \,\lVert v_2\rVert^2\,d\mu_g \\
    \int_M \lVert v_1\rVert^2 \lVert v_2\rVert^2\,d\mu_g &\int_M \lVert v_2\rVert^2\,g(v_1,v_2)\,d\mu_g
\end{pmatrix}
\end{align*}

\begin{lem}
    The matrices $Id$, $A^{\prime}[v_1\odot v_1]$, and $A^{\prime}[v_1\odot v_2]$ fail to span $\mathcal{L}(E(g))$ exactly when 
\small
    \begin{align*}
        \int_M\lVert v_1\rVert^4\,d\mu_g&= \bigg(2\int_M g(v_1,v_2)^2\,d\mu_g-\int_M \lVert v_1\rVert^2 \lVert v_2\rVert ^2\,d\mu_g\bigg)\\
        &+\frac{\bigg(\int_M \lVert v_1 \rVert^2 g(v_1,v_2)\,d\mu_g\bigg)^2-\bigg(\int_M \lVert v_1 \rVert^2 g(v_1,v_2)\,d\mu_g \bigg)\,\bigg(\int_M \lVert v_2 \rVert^2 g(v_1,v_2)\,d\mu_g\bigg)}{\int_M \lVert v_1\rVert^2 \lVert v_2\rVert^2\,d\mu_g}
    \end{align*}
\normalsize
\end{lem}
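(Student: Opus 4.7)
The plan is pure linear algebra in a 3-dimensional ambient vector space. The space $\mathcal{L}(E(g))$ of self-adjoint endomorphisms of the two-dimensional eigenspace $E(g)$ has real dimension $3$, so the three matrices $Id$, $A^{\prime}[v_1\odot v_1]$, and $A^{\prime}[v_1\odot v_2]$ fail to span $\mathcal{L}(E(g))$ iff they are linearly dependent, which is detected by the vanishing of a single $3\times 3$ determinant.

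Concretely, using the orthonormal basis $\{v_1, v_2\}$ to identify $\mathcal{L}(E(g))$ with the symmetric $2\times 2$ matrices, I would first abbreviate
\begin{align*}
a &:= \int_M \lVert v_1\rVert^4 \, d\mu_g,\\
b &:= \int_M \lVert v_1\rVert^2 \, g(v_1, v_2) \, d\mu_g,\\
c &:= 2\int_M g(v_1, v_2)^2 \, d\mu_g - \int_M \lVert v_1\rVert^2 \, \lVert v_2\rVert^2 \, d\mu_g,\\
d &:= \int_M \lVert v_1\rVert^2 \, \lVert v_2\rVert^2 \, d\mu_g,\\
e &:= \int_M \lVert v_2\rVert^2 \, g(v_1, v_2) \, d\mu_g,
\end{align*}
so that, in the basis of diagonal and off-diagonal elementary symmetric $2\times 2$ matrices, $Id$, $\tfrac{2}{\lambda}A^{\prime}[v_1\odot v_1]$, and $\tfrac{2}{\lambda}A^{\prime}[v_1\odot v_2]$ correspond to the column vectors $(1,1,0)^T$, $(a,c,b)^T$, and $(b,e,d)^T$ in $\mathbb{R}^3$. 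The rescaling by $2/\lambda$ is harmless, since we have restricted $A_g$ to $ker(d)^{\perp_g}$ so $\lambda\neq 0$, and multiplying a vector by a nonzero scalar does not affect linear independence.

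Next I would compute the determinant
\begin{align*}
\det\begin{pmatrix} 1 & 1 & 0 \\ a & c & b \\ b & e & d \end{pmatrix} = d(c-a) + b(b-e)
\end{align*}
by cofactor expansion along the top row. The three matrices fail to span $\mathcal{L}(E(g))$ iff this vanishes, i.e.\ iff $a = c + (b^2-be)/d$, which after substituting back the defining integrals is precisely the claimed identity.

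The main ``obstacle'' is really just a minor edge case: the stated formula divides by $d$, so strictly one must first ensure $d > 0$. If $d = 0$, then by nonnegativity of the integrand $\lVert v_1\rVert^2 \lVert v_2\rVert^2$ the smooth forms $v_1$ and $v_2$ must have pointwise complementary vanishing behavior, forcing also $b = e = 0$; in that case $A^{\prime}[v_1\odot v_2]$ is the zero matrix and the three matrices obviously fail to span $\mathcal{L}(E(g))$, so this degenerate case is handled separately.
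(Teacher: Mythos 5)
Your proof is correct and follows essentially the same route as the paper: identify $\mathcal{L}(E(g))$ with $\mathbb{R}^3$, observe that failure to span is the vanishing of a $3\times 3$ determinant, and rearrange $dc-da+b^2-be=0$ into the stated identity. The only difference is the edge case: rather than treating $d=\int_M\lVert v_1\rVert^2\lVert v_2\rVert^2\,d\mu_g=0$ separately, the paper rules it out altogether via the unique continuation property of eigenforms of the Hodge Laplacian (the complements of the zero sets of $v_1$ and $v_2$ are open and dense, so $d>0$), which is arguably the cleaner route since the ``exactly when'' formula presupposes $d>0$ and this positivity is reused in the subsequent lemma.
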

\begin{proof}
    Identifying $\mathcal{L}(E(g))$ with $\mathbb{R}^3$ in the obvious way we can form the matrix built from the images of $Id$, $A^{\prime}[v_1\odot v_1]$, and $A^{\prime}[v_1\odot v_2]$ under this identification
\begin{align*}
    \begin{pmatrix}
        1&0&1\\
        \int_M \lVert v_1 \rVert^2 g(v_1,v_2)\,d\mu_g&\int_M \lVert v_1 \rVert^2 \lVert v_2\rVert^2\,d\mu_g &\int_M \lVert v_2 \rVert^2 g(v_1,v_2)\,d\mu_g\\
        \int_M \lVert v_1 \rVert^4 \,d\mu_g & \int_M \lVert v_1 \rVert^2 g(v_1,v_2)\,d\mu_g& 2\int_M g(v_1,v_2)^2\,d\mu_g-\int_M \lVert v_1\rVert^2 \lVert v_2\rVert^2\,d\mu_g
    \end{pmatrix}
\end{align*}
The vanishing of its determinant gives the condition
\begin{align*}
    &\bigg(\int_M \lVert v_1 \rVert^2 \lVert v_2\rVert^2\,d\mu_g \bigg)\bigg(2\int_M g(v_1,v_2)^2\,d\mu_g-\int_M \lVert v_1\rVert^2 \lVert v_2\rVert^2\,d\mu_g\bigg)\\
    &+\bigg(\int_M \lVert v_1\rVert^2 g(v_1,v_2)\,d\mu_g\bigg)^2 
    -\bigg(\int_M \lVert v_1\rVert^2 \lVert v_2\rVert^2\,d\mu_g\bigg)\bigg(\int_M \lVert v_1\rVert^4 \,d\mu_g\bigg)\\
    &-\bigg(\int_M \lVert v_1\rVert^2 g(v_1,v_2)\,d\mu_g\bigg)\bigg(\int_M \lVert v_2\rVert^2 g(v_1,v_2)\,d\mu_g\bigg)=0
\end{align*}
Noting that eigenforms of the curl operator are in particular eigenforms of the Hodge Laplacian, we see that they satisfy a unique continuation property (\cite{Aronszajn1957},\cite{Kazdan1988}), i.e. they vanish identically if they vanish on an open subset. It immediately follows that $\int_M \lVert v_1\rVert^2\lVert v_2\rVert^2\,d\mu_g>0$, and so we are done. 
\end{proof}

In order to deal with this degenerate case, we introduce $$\Tilde{h}_a=v_1\odot v_1+a\,tr_g[v_1\odot v_1]\, g$$ We compute once more
\begin{align*}
    A^\prime [\Tilde{h}_a]= \frac{\lambda}{2}
\begin{pmatrix}
    (1-a)\int_M \lVert v_1\rVert^4\,d\mu_g & (1-a) \int_M\lVert v_1\rVert ^2 g(v_1,v_2)\,d\mu_g\\
    (1-a)\int_M \lVert v_1\rVert^2 g(v_1,v_2)\,d\mu_g & 2\int_M g(v_1,v_2)^2-(1+a)\int_M \lVert v_1\rVert^2\lVert v_2\rVert^2\,d\mu_g
\end{pmatrix}
\end{align*}

\begin{lem}
\label{lemma: transversality of curl operator}
    $Id$, $A^{\prime}[\Tilde{h}_a]$, and $A^{\prime}[v_1\odot v_2]$ span $\mathcal{L}(E(g))$ for some choice of $a$.
\end{lem}
\begin{proof}
    Setting the determinant associated to these three vectors to zero and solving for $\int_M \lVert v_1\rVert^4\,d\mu_g$ as in the preceding Lemma we get

\begin{align*}
&\int_M\lVert v_1\rVert^4\,d\mu_g=\bigg(\frac{2}{1-a}\int_M g(v_1,v_2)^2\,d\mu_g-\frac{1+a}{1-a}\int_M \lVert v_1\rVert^2 \lVert v_2\rVert ^2\,d\mu_g\bigg)+\\
&\frac{\bigg(\int_M \lVert v_1 \rVert^2 g(v_1,v_2)\,d\mu_g\bigg)^2-\bigg(\int_M \lVert v_1 \rVert^2 g(v_1,v_2)\,d\mu_g\bigg)\,\bigg(\int_M \lVert v_2 \rVert^2 g(v_1,v_2)\,d\mu_g\bigg)}{\int_M \lVert v_1\rVert^2 \lVert v_2\rVert^2\,d\mu_g}\\
& =V_a
\end{align*}
Now suppose that in fact all of these determinants vanish, so the RHS of the above equation does not depend on the choice of $a\in \mathbb{R}\setminus \{1\}$. Then in particular, $V_a=V_0$, and so we get 
\begin{align*}
    &(1-a)\bigg(2\int_M g(v_1,v_2)^2\,d\mu_g-\int_M \lVert v_1\rVert^2\lVert v_2\rVert^2\,d\mu_g\bigg)\\
    =&\bigg(2\int_M g(v_1,v_2)^2\,d\mu_g-(1+a)\int_M \lVert v_1\rVert^2\lVert v_2\rVert^2\,d\mu_g\bigg)
\end{align*}
which is equivalent to 
\begin{align*}
    \int_M \big(g(v_1,v_2)^2-\lVert v_1\rVert^2\lVert v_2\rVert^2\big)\,d\mu_g=0
\end{align*}
meaning that $v_1\parallel v_2$ for every point in $M$. We will now show that this is impossible.
For this note that the complements of the zero set of $v_1$ and $v_2$ are open and dense by the unique continuation property. This means that there exists an open set $U\subset M$ in the complement of these zero sets and a smooth function $s$ such that $v_1=s\,v_2$ on $U$. For constant $s$ we immediately get a contradiction since $\langle v_1,v_2\rangle_g=0$, so $s$ is some nonconstant function. Plugging this into the eigenform-equation for the curl operator yields
\begin{align*}
    \lambda v_1=A_g v_1=A_g (s\,v_2)=s\,A_g v_2 +\ast_g (ds\wedge v_1)=\lambda\,v_1+\ast_g (ds\wedge v_1)
\end{align*}
This means that $ds\parallel v_1$, so $v_1=f\,ds$ for some smooth function $f$. Restricting to a subset $\Tilde{U}\subset U$ on which $ds$ does not vanish, we see that this leads to the following contradiction
\begin{align*}
    0 < v_1\wedge dv_1 = f\,ds\wedge d(f\,ds)=f^2\,ds\wedge ds =0
\end{align*}
\end{proof}

Now decompose $\mathcal{H}_g=\mathcal{H}_g^+\oplus \mathcal{H}_g^-$, where $\mathcal{H}_g^+$ and $\mathcal{H}_g^-$ are the subspaces of $\mathcal{H}_g$ spanned by the eigenforms of $A_g$ corresponding to positive and negative eigenvalues, respectively. This induces the natural splitting $\Delta_g=\Delta^+_g\oplus\Delta_g^-$.
An immediate consequence of the preceding Propositions and Theorem \ref{theorem: meagre codimension criterion} is the following

\begin{thm}
    The curl operator in dimension $3$ has simple spectrum unless $g$ is in a set of meagre codimension $2$. Moreover, the same is true for $\Delta_g^\pm$.
\end{thm}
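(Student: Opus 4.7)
The plan is to verify the hypotheses of Theorem \ref{theorem: meagre codimension criterion} for the family $A_g$ acting on the Hilbert spaces $\mathcal{H}_g = \ker(d)^{\perp_g}$ and then apply it. Most of what is required is already recorded in the preceding discussion: the family of inner products $\langle -,-\rangle_g$ varies smoothly with $g$, each $A_g$ is self-adjoint with respect to $\langle -,-\rangle_g$, its spectrum on $\mathcal{H}_g$ is discrete with finite-dimensional eigenspaces, and the spaces $\mathcal{H}_g$ can be identified with the common Banach space $\mathcal{H}/\ker(d)$ so that the common-domain hypothesis is met. The only item left requiring independent verification is Fr\'echet-differentiability of $g \mapsto R_{A_g}(\mu)$; I would derive this from the explicit formula for $A'_g[h]$ in the Enciso--Peralta-Salas lemma together with the second resolvent identity $R_{A_g}(\mu) - R_{A_{g_0}}(\mu) = R_{A_g}(\mu)(A_{g_0} - A_g)R_{A_{g_0}}(\mu)$.

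The SAH$2$ transversality condition from Definition \ref{def: SAH2} is the key remaining point, and it is precisely what Lemma \ref{lemma: transversality of beltrami operator} establishes: for any two-dimensional eigenspace $E(g)$ with $\langle -,-\rangle_g$-orthonormal basis $v_1,v_2$, the tensors $\tilde h_a$ and $v_1 \odot v_2$ produce endomorphisms $A'_g[\tilde h_a]$ and $A'_g[v_1 \odot v_2]$ which, together with the identity, span $\mathcal{L}(E(g))$ for a suitable $a$. This is equivalent to the linear independence of the two functionals appearing in Definition \ref{def: SAH2}. Theorem \ref{theorem: meagre codimension criterion} then directly yields that the set of metrics for which $A_g$ has a repeated eigenvalue on $\mathcal{H}_g$ is of meagre codimension $2$ in $\mathcal{G}^k$.

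To transfer the conclusion to $\Delta_g^\pm$, I would argue as follows. On the invariant subspace $\mathcal{H}_g^+$ the operator $A_g$ is strictly positive and $\Delta_g^+ = (A_g|_{\mathcal{H}_g^+})^2$, so the map $\lambda \mapsto \lambda^2$ is a multiplicity-preserving bijection between the positive part of the spectrum of $A_g$ and the full spectrum of $\Delta_g^+$; the analogous reasoning applies on $\mathcal{H}_g^-$. Simplicity of the coclosed spectrum of $A_g$ therefore forces simplicity of both $\Delta_g^\pm$, and the single meagre codimension $2$ exceptional set obtained above controls all three operators simultaneously. The main obstacle I anticipate lies not in the logical skeleton of the argument but in the technical bookkeeping around the Fr\'echet-differentiability of the resolvent and the smooth identification of the varying spaces $\mathcal{H}_g$ with a fixed Banach space, as this is where the details of the $g$-dependent splitting $\mathcal{H} = \ker(d) \oplus \mathcal{H}_g$ genuinely need to be checked and are most easily glossed over.
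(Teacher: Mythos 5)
Your proposal is correct and follows essentially the same route as the paper: verify the hypotheses of Theorem \ref{theorem: meagre codimension criterion} for $A_g$ on $\mathcal{H}_g\cong\mathcal{H}/\ker(d)$, obtain SAH$2$ from Lemma \ref{lemma: transversality of beltrami operator}, and conclude; the paper indeed states the theorem as an immediate consequence of exactly these ingredients. Your explicit justification of the passage to $\Delta_g^\pm$ (squaring is multiplicity-preserving on each sign-definite invariant subspace) is a correct filling-in of a step the paper leaves implicit.
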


\begin{rmk}
    One may expect that one could simply extend our proof technique to the full Hodge Laplacian $\Delta_g$ on one-forms by virtue of the following argument: given a $2$-dimensional eigenspace $E(g)$ of $\Delta_g=A_g^2$ corresponding to an eigenvalue $\lambda^2 >0$, there exists an orthonormal basis of eigenvectors of $A_g$ spanning $E(g)$. This is true because $A_g$ is an endomorphism of $E(g)$ and since $\mathcal{H}$ admits an orthonormal basis composed of eigenvectors of $A_g$. Calling this pair of eigenvectors of $A_g$ spanning $E(g)$ $v_1$ and $v_2$, we note that the $v_i$ are eigenvectors to potentially different eigenvalues, namely $\pm \lambda$.\\
Now, an easy application of the product rule and the symmetry of the operator $A_g$ with respect to $\langle-,-\rangle_g$ yields
\begin{align*}
    \langle (A_g^2)^\prime v_l,v_k\rangle_{g}=\langle A_g^\prime A_g+A_gA^\prime_g v_l,v_k\rangle_{g}=2\,\lambda\,\langle A_g^\prime v_l,v_k\rangle_{q_0}  
\end{align*}
Now if $\lambda_i=-\lambda_j$, we observe that $\Delta_g^{\prime}[v_1\odot v_2]=0$, and $\Delta_g^{\prime}[v_i\odot v_i]=2\,\lambda\,A_g^{\prime}[v_i\odot v_i]$. While this means that we cannot repeat the arguments of the previous two Lemmas for the Hodge Laplacian on coexact one-forms, we do reproduce a result of Enciso and Peralta-Salas \cite{Enciso2012}: It is not difficult to check that $A_g^{\prime}[v_1\odot v_1]$ and $A_g^{\prime}[v_2\odot v_2]$ are always nonzero and that they are linearly dependent iff $g(v_1,v_2)=0$ and $\lVert v_1\rVert=\lVert v_2\rVert$ pointwise, in which case $A_g^{\prime}[v_1\odot v_1]$ is transverse to $\mathbb{R}\cdot Id$. This easily implies that there always exists at least one $h$ such that $\Delta_g^{\prime}[h]$ is transverse to $\mathbb{R}\cdot Id$. By Lemma \ref{lemma:meagre}, we conclude that the set of Riemannian metrics for which the spectrum of the Hodge Laplacian on coexact one-forms has non-simple spectrum is meagre.\par
The proof of Corollary \ref{cor: meagre one but not two} will show that this is as far as we can go for the Hodge Laplacian on coexact $1$-forms.
\end{rmk}

\subsection{The 1-Parameter Family Arguments}
\label{subsection: the one parameter family arguments}
To prove Theorem \ref{theorem:main} we pick a new parameter space. We fix two Riemannian metrics $g_0$ and $g_1$ and define $W^k:=\{w\in C^k([0,1],\mathcal{G}^\ell):w(0)=g_0, \,w(1)=g_1\}$, where $2\leq \ell <\infty$. We use $\mathcal{X}=W^k\times (0,1)$ and define the family of operators $A^{\ast}_{(w,t)}=A_{w(t)}$. Having made these definitions we are ready to prove Theorem \ref{theorem:main}.

\begin{proof}[Proof of Theorem \ref{theorem:main}]
The plan is to once again use Theorem \ref{theorem: meagre codimension criterion}. To this end we note that $(A^\ast)^\prime_{(w,t)}[(u,s)]=A^\prime_{w(t)}[w^\prime(t)s+u(t)]$. Thus given a parameter value $(w_0,t_0)$ for which an eigenspace of $A^\ast$ is $2$-dimensional, we evaluate the derivative of $A^\ast$ at $(w_0,t_0)$ and apply it to $(u=w_0,s=0)$. This shows that $Im((A^\ast)^\prime_{(w_0,t_0)})\supseteq Im(A^\prime_{w_0(t_0)})$, and so we may conclude that the non-simple set $N$ has meagre codimension $2$ in $W^k\times (0,1)$. Lemma \ref{lemma:codimension} tells us that $\pi:(w,t)\mapsto w$ maps $N$ to a set of meagre codimension $1$ in $W^k$, which is a meagre set by Lemma \ref{lemma:meagre}. Since the complement of $\pi(N)$ in $W^k$ are precisely those $1$-parameter families which have simple coexact spectrum for all values of $t$, we are done.
\end{proof}

We can also prove, however, that an analogous statement for the nonzero spectrum of the Hodge Laplacian is false. As mentioned in the introduction, generic simplicity of the Hodge Laplacian in dimension $3$ fails in two different ways: eigenvalues corresponding to positive and negative eigenvalues of the curl operator may cross, as may exact and coexact eigenvalues. We will need the following Lemmas in the proof of these two statements.

\begin{lem}
\label{lem: continuous descent}
    Let $T:X\to Y$ be a bounded operator between Banach spaces $X$ and $Y$, and let furthermore $Z\subset X$, $W\subset Y$ be closed subspaces. Suppose $T$ vanishes on $Z$. Then 
    \begin{align*}
        \lVert \Tilde{T} \rVert_{X/Z\to Y/W}\leq \lVert T \rVert_{X\to Y}
    \end{align*}
    where $\Tilde{T}$ is the projection of $T$ to the quotient $X/Z$.
\end{lem}
\begin{proof}
We compute
\begin{align*}
    \lVert \Tilde{T} (u+Z) \rVert_{Y/W} &=\inf_{w\in W} \lVert \Tilde{T}(u+Z)+w \rVert_{Y}\leq \lVert \Tilde{T}(u+Z) \rVert_{Y}=\lVert T(u)\rVert_Y \\
    & \leq\lVert T\rVert_{X\to Y} \cdot\inf_{v\in \{u+Z\}} \lVert v\rVert_{X}=\lVert T\rVert_{X\to Y} \cdot\lVert u\rVert_{X/Z}
\end{align*}
and the result follows.
\end{proof}

\begin{lem}
\label{lem: nonvanishing of eigenvalues along one parameter families}
Let $(g_t)_{t\in [0,1]}$ be a continuous $1$-parameter family of Riemannian metrics. Then the first positive (or negative) eigenvalue $\lambda_1(t)$ is bounded away from $0$. 
\end{lem}
\begin{proof}
The curl operator $A_g:H^1 (M)\to L^2 (M)$ is bounded and descends to a bounded operator between the Banach spaces $\Tilde{A}_g: H^1(M)/ker(d)\to L^2 (M)/ker(d)$. The continuous dependence of the operator norm on $g$ factors to the quotient by Lemma \ref{lem: continuous descent}. Calling $H^1(M)/ker(d)=X$ and $L^2(M)/ker(d)=Y$ we know that for every metric $g$, the lowest (positive or negative) eigenvalue is greater than zero and so
\begin{align*}
    \lVert A_g u\rVert_Y \geq C_g \lVert u\rVert_X
\end{align*}
Fix a Riemannian metric $g_0$. Then there exists a neighbourhood $U$ of $g_0$ such that $\lVert A_g u\rVert_Y > \frac{C_{g_0}}{2}\lVert u\rVert_X$.\\
Indeed, choose $U$ so that $\lVert A_g-A_{g_0}\rVert_{X\to Y}< \frac{C_{g_0}}{2}$. Then
\begin{align*}
    \lVert A_g u\rVert_Y &\geq \lVert A_{g_0} u\rVert_Y - \lVert (A_g-A_{g_0})u\rVert_Y\\
    &\geq C_{g_0}\lVert u\rVert_X - \lVert A_g-A_{g_0}\rVert_{X\to Y}\cdot \lVert u\rVert_Y\\
    & \geq \frac{C_{g_0}}{2}\lVert u\rVert_{X}
\end{align*}
Now suppose that the family $\lambda_1 (t)$ goes to zero as $t\to t_0$. Then $C_{g_0}>0$, but $C_{g_t}$ goes to $0$ as $t$ goes to $t_0$, which is a contradiction to the above bound.
\end{proof}

It should be noted that the proof of Lemma \ref{lem: nonvanishing of eigenvalues along one parameter families} uses the fact that all $A_g$ have the same kernel. With these preparatory results out of the way we can prove collisions of the exact and coexact spectrum as well as of the positive and negative curl spectrum cannot in general be avoided along $1$-parameter families of metrics. The proof is based on the explicit descriptions of a deformation of the round metric on $S^3$ along a particular $1$-parameter family of metrics in \cite{Tanno1979LaplacianFunctions} and \cite{Tanno1983}.

\begin{proposition}\label{proposition:example of nonsimple families}
   There exist Riemannian metrics $g_0$ and $g_1$ on $\mathbb{S}^3$ such that the Hodge Laplacian has a crossing of an exact and a coexact eigenvalue along any continuous curve $g(t)$ with $g(0)=g_0$ and $g(1)=g_1$. Similarly, there exist Riemannian metrics $g_0^\prime$ and $g_1^\prime$ on $\mathbb S^3$ such that the Hodge Laplacian has a crossing of a positive and a negative curl eigenvalue along any continuous curve $g(t)$ with $g(0)=g_0^\prime$ and $g(1)=g_1^\prime$.
\end{proposition}

\begin{proof}
The idea of the proof of both statements is the same. We introduce a combinatorial invariant associated to the spectrum of the Hodge Laplacian for some metric $g$ which can only change along $1$-parameter families of metrics if a closed and a coclosed eigenvalue of the Hodge Laplacian cross (or, for the second statement, if a positive and a negative curl eigenvalue cross). Then we exhibit two metrics for which this invariant is different, thus yielding the result. We will start with the statement on exact and coexact eigenvalues.

Given a Riemannian metric $g$ so that $\Delta_g$ has simple spectrum, we colour eigenvalues red or blue depending on whether they come from an exact or a coexact eigenvalue of the Hodge Laplacian, respectively. See Figure \ref{figure:mui} for an illustration of this. Clearly the corresponding sequence of red and blue dots (counted with multiplicity) on the positive half line does not change along $1$-parameter families of metrics unless there is a crossing of a red and a blue dot. We point out that due to \cite[Theorem 1.1]{Enciso2012}, this combinatorial ordering is well defined for generic Riemannian metrics as no multiple eigenvalues of the Hodge Laplacian occur.\par
Crucially, Lemma \ref{lem: nonvanishing of eigenvalues along one parameter families} implies that (nonzero) eigenvalues of the Hodge Laplacian never go to $0$ along $1$-parameter families of Riemannian metrics.\\

\begin{figure}[h]
\centering
\includegraphics[width=0.6\textwidth]{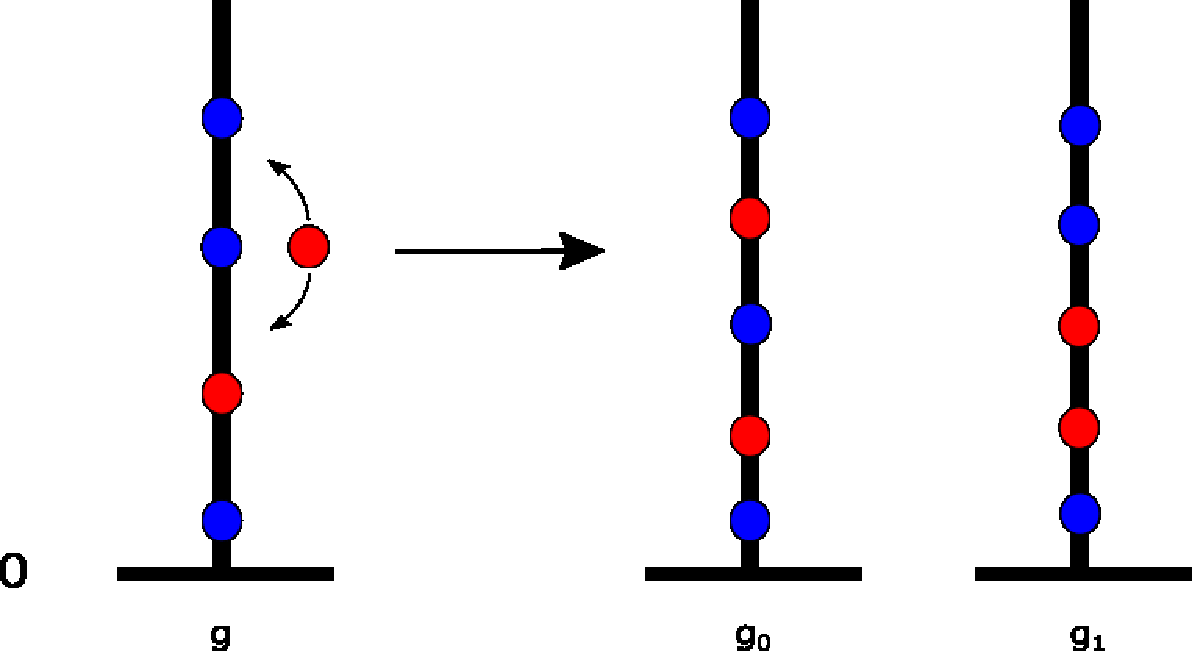}
\caption{Changing the combinatorial structure of the spectrum.}
\label{figure:mui}
\end{figure}

Consequently, if we find two Riemannian metrics $g_0$ and $g_1$ for which the Hodge Laplacian has simple spectrum but whose spectrum defines a different sequence of colored points, we have proven the first part of Proposition \ref{proposition:example of nonsimple families}, see Figure \ref{figure:mui} for an illustration of this process.\\

The main ingredient for constructing two metrics whose associated spectra of the Hodge Laplacian define different sequences of coloured points is a particular deformation of the round metric $g_{st}$ on odd dimensional spheres considered by Tanno in \cite{Tanno1979LaplacianFunctions} and \cite{Tanno1983}. We will specialize their discussion to the $3$-dimensional setting. Let $\alpha$ be a Hopf field on $S^3$, then consider the $1$-parameter family of Riemannian metrics given by
\begin{align*}
    g_t = t^{-1} g_{st} +(t^{-2}-t^{-1})\,\alpha\otimes\alpha 
\end{align*}
Using the fact that the Laplacians $\Delta_{g_t}$ and $\Delta_{g_{st}}$ on functions commute, Tanno succeeded in computing the full effect of this deformation on the spectrum of the Laplacian on functions in \cite[Lemma 4.1. and Proposition 4.2.]{Tanno1979LaplacianFunctions}. In particular, they proved that the first eigenvalue of the Laplacian on functions is given by
\begin{align*}
    \lambda_1^0 (t) = \begin{cases}
    2t+t^{-2} & \text{if } t^{-3} \leq 6\\
        8t                                  & \text{if } t^{-3} \geq 6
    \end{cases} 
\end{align*}
Note that at $t=1$ (the round metric), the first eigenvalue of the Laplacian on functions is $3$, wheres the square of the first curl eigenvalue is $4$. Thus, the combinatorial structure of the first few eigenvalues of the Hodge Laplacian at $g_1 = g_{st}$ is that the first four eigenvalues (counted with multiplicity) belong to the exact spectrum, and the next $6$ eigenvalues (counted with multiplicity) belong to the coexact spectrum. While the Hodge Laplacian restricted to the coexact spectrum does not have the nice commutation relation mentioned above, Tanno proved in \cite[Theorem 4.1.]{Tanno1983} that there is a coexact eigenvalue of the Hodge Laplacian which goes below $\lambda_1^0 (t)$, the crossing happening at $t=\big(\frac{1}{4}+\frac{\sqrt{5}}{4}\big)^{\frac{1}{3}}$, thus changing the combinatorial structure of the spectrum of the Hodge Laplacian to one where the first eigenvalue is a coexact one. This concludes the proof of the first claim.

For the second claim we consider the same family of metrics but concentrate on the deformation of the $6$-dimensional eigenspace of the Hodge Laplacian associated to the lowest coexact eigenvalue $\lambda=4$ with respect to the round metric $g_1=g_{st}$. This eigenspace is spanned by the three Hopf fields (the solutions of the curl eigenvalue problem for the eigenvalue $\lambda = 2$ and the three anti-Hopf fields (the solutions of the curl eigenvalue problem for the eigenvalue $\lambda=-2$). The anti-Hopf fields are obtained from the Hopf fields via an application of the orientation reversing involution $(x_1,y_1,x_2,y_2)\mapsto (x_1,y_1,x_2,-y_2)$ of $\mathbb R^4$ restricted to $\mathbb{S}^3$, see for example \cite[Remark 6]{PeraltaSlobodeanu2021}.
Tanno proves \cite[Theorem 4.1]{Tanno1983} that, along the family of metrics $g_t$, this eigenspace splits into three eigenspaces associated to the three eigenvalues:
\begin{enumerate}
    \item $4t^2$
    \item $2t^{-2}$
    \item $8t + 2t^4 - 2t(t^6+8t^3)^{\frac{1}{2}} $
\end{enumerate}
The first of these corresponds to the Hopf field $\alpha$, the second one corresponds to the remaining two Hopf fields, and the third one corresponds to the three anti-Hopf fields and therefore the first negative eigenvalue of the curl operator $g_t$ for $t$ close enough to $1$. Thus we have that along the transition from $t=1-\varepsilon$ to $t=1+\varepsilon$ the combinatorial structure of the coexact spectrum changes from one where the first eigenvalue is positive, followed by three negative eigenvalues (counted with multiplicity) to one where the first two eigenvalues (counted with multiplicity) are positive followed by three negative ones. Breaking up these higher multiplicities with a generic perturbation (which, as mentioned previously, we can do according to \cite[Theorem 1.1.]{Enciso2012}) completes the proof.
\end{proof}

We now get Theorem \ref{theorem: Hodge Laplacian 1-parameter genericity} and Corollary \ref{cor: meagre one but not two} as easy consequences of everything that has been discussed in Section \ref{section:main result}.

\begin{proof}[Proof of Theorem \ref{theorem: Hodge Laplacian 1-parameter genericity}]
    Proposition \ref{proposition:example of nonsimple families} implies that the spectrum of the Hodge Laplacian is not simple along generic $1$-parameter families of Riemannian metrics. The result by Uhlenbeck on the simplicity of the Laplace-Beltrami operator along $1$-parameter families of Riemannian metrics, Lemma \ref{lemma: transversality of curl operator} and the proof of Theorem \ref{theorem:main} allow us to conclude simplicity of the positive and negative coexact spectrum along generic $1$-parameter families.
\end{proof}

An immediate consequence of Proposition \ref{proposition:example of nonsimple families} is the proof of Corollary \ref{cor: meagre one but not two}.

\begin{proof}[Proof of Corollary \ref{cor: meagre one but not two}]
    By Lemma \ref{lemma:meagre}, a set has meagre codimension $1$ iff it is meagre. The fact that the non-simple set of Riemannian metrics for which the Hodge Laplacian in dimension $3$ does not have simple nonzero spectrum has meagre codimension $1$ was proven in \cite{Enciso2012}.\\
    Now by Proposition \ref{proposition:example of nonsimple families}, the non-simple set cannot have meagre codimension $2$ since otherwise we would get generic simplicity for $1$-parameter families.
\end{proof}

\printbibliography

@article{Uhlenbeck1976,
   abstract = {We give an overview of some new and old results on geometric properties of eigenfunctions of Laplacians on Riemannian manifolds. We discuss properties of nodal sets and critical points, the number of nodal domains, and asymptotic properties of eigenfunctions in the high-energy limit (such as weak * limits, the rate of growth of Lp norms, and relationships between positive and negative parts of eigenfunctions).},
   author = {K. Uhlenbeck},
   doi = {10.2307/2374041},
   issue = {4},
   journal = {American Journal of Mathematics},
   title = {Generic Properties of Eigenfunctions},
   volume = {98},
   year = {1976},
}

@article{Enciso2012,
   abstract = {In this paper we analyze the eigenvalues and eigenfunctions of the Hodge Laplacian for generic metrics on a closed 3-manifold $M$. In particular, we show that the nonzero eigenvalues are simple and the zero set of the eigenforms of degree 1 or 2 consists of isolated points for a residual set of $C^r$ metrics on $M$, for any integer $r\geq2$. The proof of this result hinges on a detailed study of the Beltrami (or rotational) operator on co-exact 1-forms.},
   author = {Alberto Enciso and Daniel Peralta-Salas},
   doi = {10.1090/s0002-9947-2012-05496-1},
   issue = {8},
   journal = {Transactions of the American Mathematical Society},
   title = {Nondegeneracy of the eigenvalues of the Hodge Laplacian for generic metrics on 3-manifolds},
   volume = {364},
   year = {2012},
}

@article{Etnyre2000III,
   abstract = {We draw connections between the field of contact topology (the study of totally nonintegrable plane distributions) and the study of Beltrami fields in hydrodynamics on Riemannian manifolds in dimension three. We demonstrate an equivalence between Reeb fields (vector fields which preserve a transverse nowhere-integrable plane field) up to scaling and rotational Beltrami fields (non-zero fields parallel to their non-zero curl). This immediately yields existence proofs for smooth, steady, fixed-point free solutions to the Euler equations on all 3-manifolds and all subdomains of ℝ3 with torus boundaries. This correspondence yields a hydrodynamical reformulation of the Weinstein conjecture from symplectic topology, whose recent solution by Hofer (in several cases) implies the existence of closed orbits for all C∞ rotational Beltrami flows on S3. This is the key step for a positive solution to a 'hydrodynamical' Seifert conjecture: all Cω steady flows of a perfect incompressible fluid on S3 possess closed flowlines. In the case of spatially periodic Euler flows on ℝ3, we give general conditions for closed flowlines derived from the algebraic topology of the vector field.},
   author = {John Etnyre and Robert Ghrist},
   doi = {10.1088/0951-7715/13/2/306},
   issue = {2},
   journal = {Nonlinearity},
   title = {Contact topology and hydrodynamics: I. Beltrami fields and the Seifert conjecture},
   volume = {13},
   year = {2000},
}

@article{Teytel1999,
   author = {Mikhail Teytel},
   doi = {10.1002/(sici)1097-0312(199908)52:8<917::aid-cpa1>3.3.co;2-j},
   issue = {8},
   journal = {Communications on Pure and Applied Mathematics},
   title = {How rare are multiple eigenvalues?},
   volume = {52},
   year = {1999},
}

@article{Verdière1988,
   author = {Y. Colin de Verdière},
   doi = {10.1007/BF02566761},
   issue = {1},
   journal = {Commentarii Mathematici Helvetici},
   title = {Sur une hypothèse de transversalité d'Arnold},
   volume = {63},
   year = {1988},
}

@article{Lipnowski2018,
   abstract = {We relate small 1-form Laplacian eigenvalues to relative cycle complexity on closed hyperbolic manifolds: small eigenvalues correspond to closed geodesics no multiple of which bounds a surface of small genus. We describe potential applications of this equivalence principle toward proving optimal torsion homology growth in families of hyperbolic 3-manifolds Benjamini–Schramm converging to H3.},
   author = {Michael Lipnowski and Mark Stern},
   doi = {10.1007/s00039-018-0471-x},
   issue = {6},
   journal = {Geometric and Functional Analysis},
   title = {Geometry of the Smallest 1-form Laplacian Eigenvalue on Hyperbolic Manifolds},
   volume = {28},
   year = {2018},
}

@article{Etnyre2012,
   abstract = {This paper begins the study of relations between Riemannian geometry and global properties of contact structures on 3-manifolds. In particular we prove an analog of the sphere theorem from Riemannian geometry in the setting of contact geometry. Specifically, if a given three dimensional contact manifold (M,ξ) admits a complete compatible Riemannian metric of positive 4/9-pinched curvature then the underlying contact structure ξ is tight; in particular, the contact structure pulled back to the universal cover is the standard contact structure on S 3. We also describe geometric conditions in dimension three for ξ to be universally tight in the nonpositive curvature setting. © 2011 Springer-Verlag.},
   author = {John B. Etnyre and Rafal Komendarczyk and Patrick Massot},
   doi = {10.1007/s00222-011-0355-2},
   issue = {3},
   journal = {Inventiones Mathematicae},
   title = {Tightness in contact metric 3-manifolds},
   volume = {188},
   year = {2012},
}

@article{Lin2021,
   abstract = {We exhibit the first examples of hyperbolic three-manifolds for which the Seiberg-Witten equations do not admit any irreducible solution. Our approach relies on hyperbolic geometry in an essential way; it combines an explicit upper bound for the first eigenvalue on coexact $1$-forms $\lambda_1^*$ on rational homology spheres which admit irreducible solutions together with a version of the Selberg trace formula relating the spectrum of the Laplacian on coexact $1$-forms with the volume and complex length spectrum of a hyperbolic three-manifold. Using these relationships, we also provide precise numerical bounds on $\lambda_1^*$ for several hyperbolic rational homology spheres.},
   author = {Francesco Lin and Michael Lipnowski},
   doi = {10.1090/jams/982},
   issue = {1},
   journal = {Journal of the American Mathematical Society},
   title = {The Seiberg-Witten equations and the length spectrum of hyperbolic three-manifolds},
   volume = {35},
   year = {2021},
}

@article {Millman1980,
    AUTHOR = {Millman, Richard S.},
     TITLE = {Remarks on spectrum of the {L}aplace-{B}eltrami operator in
              the middle dimensions},
   JOURNAL = {Tensor (N.S.)},
  FJOURNAL = {The Tensor Society. Tensor. New Series},
    VOLUME = {34},
      YEAR = {1980},
    NUMBER = {1},
     PAGES = {94--96},
   MRCLASS = {58G25},
  MRNUMBER = {570572},
MRREVIEWER = {Akira Asada},
}

@article {GierHislop2016,
    AUTHOR = {Gier, Megan E. and Hislop, Peter D.},
     TITLE = {The multiplicity of eigenvalues of the {H}odge {L}aplacian on
              5-dimensional compact manifolds},
   JOURNAL = {J. Geom. Anal.},
  FJOURNAL = {Journal of Geometric Analysis},
    VOLUME = {26},
      YEAR = {2016},
    NUMBER = {4},
     PAGES = {3176--3193},
   MRCLASS = {58J50 (35R01 47A55)},
  MRNUMBER = {3544958},
MRREVIEWER = {Akira Asada},
       DOI = {10.1007/s12220-015-9666-7},
       URL = {https://doi.org/10.1007/s12220-015-9666-7},
}

@article {Tanno1983,
    AUTHOR = {Tanno, Shukichi},
     TITLE = {{G}eometric expressions of eigen $1$-forms of the {L}aplacian on {S}pheres},
   JOURNAL = {Spectra of Riemannian manifolds},
  FJOURNAL = {Spectra of Riemannian manifolds},
    VOLUME = {},
      YEAR = {1983},
    NUMBER = {},
     PAGES = {115--128},
   MRCLASS = {},
  MRNUMBER = {},
MRREVIEWER = {},
       DOI = {},
       URL = {},
}

@article {RieszNagy1955,
    AUTHOR = {Riesz, F and Sz.-Nagy, B.},
     TITLE = {{F}unctional {A}nalysis},
   JOURNAL = {Dover Books on Advanced Mathematics},
  FJOURNAL = {Dover Pulbications},
    VOLUME = {},
      YEAR = {1955},
    NUMBER = {},
     PAGES = {},
   MRCLASS = {},
  MRNUMBER = {},
MRREVIEWER = {},
       DOI = {},
       URL = {},
}

@incollection {CarMirPerPres2019,
    AUTHOR = {Cardona, Robert and Miranda, Eva and Peralta-Salas, Daniel and
              Presas, Francisco},
     TITLE = {Reeb embeddings and universality of {E}uler flows},
 BOOKTITLE = {Extended abstracts {GEOMVAP} 2019---geometry, topology,
              algebra, and applications; women in geometry and topology},
    SERIES = {Trends Math. Res. Perspect. CRM Barc.},
    VOLUME = {15},
     PAGES = {115--120},
 PUBLISHER = {Birkh\"{a}user/Springer, Cham},
      YEAR = {[2021] \copyright 2021},
   MRCLASS = {53E50 (35Q31 37J55)},
  MRNUMBER = {4436874},
       DOI = {10.1007/978-3-030-84800-2\_19},
       URL = {https://doi.org/10.1007/978-3-030-84800-2_19},
}

@book {Hodge1941,
    AUTHOR = {Hodge, W. V. D.},
     TITLE = {The theory and applications of harmonic integrals},
    SERIES = {Cambridge Mathematical Library},
      NOTE = {Reprint of the 1941 original,
              With a foreword by Michael Atiyah},
 PUBLISHER = {Cambridge University Press, Cambridge},
      YEAR = {1989},
     PAGES = {xiv+284},
   MRCLASS = {58A14 (01A60 01A75 32G20 53-02)},
  MRNUMBER = {1015714},
}

@article {GomesMarrocos2019,
    AUTHOR = {Gomes, Jos\'{e} N. V. and Marrocos, Marcus A. M.},
     TITLE = {On eigenvalue generic properties of the {L}aplace-{N}eumann
              operator},
   JOURNAL = {J. Geom. Phys.},
  FJOURNAL = {Journal of Geometry and Physics},
    VOLUME = {135},
      YEAR = {2019},
     PAGES = {21--31},
   MRCLASS = {47A75 (35J05 35P05 35R01 47A55 58C40)},
  MRNUMBER = {3872620},
       DOI = {10.1016/j.geomphys.2018.08.017},
       URL = {https://doi.org/10.1016/j.geomphys.2018.08.017},
}

@article {Junya2003,
    AUTHOR = {Takahashi, Junya},
     TITLE = {On the gap between the first eigenvalues of the {L}aplacian on
              functions and {$p$}-forms},
   JOURNAL = {Ann. Global Anal. Geom.},
  FJOURNAL = {Annals of Global Analysis and Geometry},
    VOLUME = {23},
      YEAR = {2003},
    NUMBER = {1},
     PAGES = {13--27},
   MRCLASS = {58J50 (35P15)},
  MRNUMBER = {1952856},
MRREVIEWER = {Colette Ann\'{e}},
       DOI = {10.1023/A:1021294732338},
       URL = {https://doi.org/10.1023/A:1021294732338},
}

@article {ColetteJunya2012,
    AUTHOR = {Ann\'{e}, Colette and Takahashi, Junya},
     TITLE = {{$p$}-spectrum and collapsing of connected sums},
   JOURNAL = {Trans. Amer. Math. Soc.},
  FJOURNAL = {Transactions of the American Mathematical Society},
    VOLUME = {364},
      YEAR = {2012},
    NUMBER = {4},
     PAGES = {1711--1735},
   MRCLASS = {58J50 (35P15)},
  MRNUMBER = {2869189},
MRREVIEWER = {Ahmad El Soufi},
       DOI = {10.1090/S0002-9947-2011-05351-1},
       URL = {https://doi.org/10.1090/S0002-9947-2011-05351-1},
}

@book {CaivoveanuRassias2001,
    AUTHOR = {Craioveanu, Mircea and Puta, Mircea and Rassias, Themistocles
              M.},
     TITLE = {Old and new aspects in spectral geometry},
    SERIES = {Mathematics and its Applications},
    VOLUME = {534},
 PUBLISHER = {Kluwer Academic Publishers, Dordrecht},
      YEAR = {2001},
     PAGES = {x+445},
   MRCLASS = {58J50 (35J25 35P05 53C20)},
  MRNUMBER = {1880186},
       DOI = {10.1007/978-94-017-2475-3},
       URL = {https://doi.org/10.1007/978-94-017-2475-3},
}

@article {Aronszajn1957,
    AUTHOR = {Aronszajn, N.},
     TITLE = {A unique continuation theorem for solutions of elliptic
              partial differential equations or inequalities of second
              order},
   JOURNAL = {J. Math. Pures Appl. (9)},
  FJOURNAL = {Journal de Math\'{e}matiques Pures et Appliqu\'{e}es. Neuvi\`eme S\'{e}rie},
    VOLUME = {36},
      YEAR = {1957},
     PAGES = {235--249},
   MRCLASS = {35.0X},
  MRNUMBER = {92067},
MRREVIEWER = {H. Bremekamp},
}

@article {Kazdan1988,
    AUTHOR = {Kazdan, Jerry L.},
     TITLE = {Unique continuation in geometry},
   JOURNAL = {Comm. Pure Appl. Math.},
  FJOURNAL = {Communications on Pure and Applied Mathematics},
    VOLUME = {41},
      YEAR = {1988},
    NUMBER = {5},
     PAGES = {667--681},
   MRCLASS = {35B60 (35J25 58G30)},
  MRNUMBER = {948075},
MRREVIEWER = {Steven George Krantz},
       DOI = {10.1002/cpa.3160410508},
       URL = {https://doi.org/10.1002/cpa.3160410508},
}

@article {PeraltaSlobodeanu2021,
    AUTHOR = {Peralta-Salas, Daniel and Slbodeanu, Radu},
     TITLE = {Contact Structures and Beltrami Fields on the Torus and the Sphere},
   JOURNAL = {Arxiv},
      YEAR = {2021},
       URL = {https://arxiv.org/pdf/2004.10185.pdf},
}

@book {GeraldTeschl2014,
    AUTHOR = {Teschl, Gerald},
     TITLE = {Mathematical methods in quantum mechanics},
    SERIES = {Graduate Studies in Mathematics},
    VOLUME = {157},
   EDITION = {Second},
      NOTE = {With applications to Schr\"{o}dinger operators},
 PUBLISHER = {American Mathematical Society, Providence, RI},
      YEAR = {2014},
     PAGES = {xiv+358},
   MRCLASS = {81-02 (47N50 81Q10 81Q12 81U10)},
  MRNUMBER = {3243083},
MRREVIEWER = {Rupert L. Frank},
       DOI = {10.1090/gsm/157},
       URL = {https://doi.org/10.1090/gsm/157},
}

@article {Tanno1979LaplacianFunctions,
    AUTHOR = {Tanno, Sh\^ukichi},
     TITLE = {The first eigenvalue of the {L}aplacian on spheres},
   JOURNAL = {Tohoku Math. J. (2)},
  FJOURNAL = {The Tohoku Mathematical Journal. Second Series},
    VOLUME = {31},
      YEAR = {1979},
    NUMBER = {2},
     PAGES = {179--185},
      ISSN = {0040-8735,2186-585X},
   MRCLASS = {58G25 (53C20)},
  MRNUMBER = {538918},
MRREVIEWER = {R.\ S.\ Millman},
       DOI = {10.2748/tmj/1178229837},
       URL = {https://doi.org/10.2748/tmj/1178229837},
}

\end{document}